\def\dist{\mbox{dist}}
\newtheorem{myDef}{Definition}
\newtheorem{thm}{Theorem}[section]
\newtheorem{cor}[thm]{Corollary}
\newtheorem{lem}[thm]{Lemma}
\newtheorem{prob}[thm]{Problem}
\newtheorem{obs}[thm]{Observation}
\newcommand{\G}{\mathcal{G}}
\newcommand{\T}{\mathcal{T}}
\begin{document}

\title{Faber–Krahn type inequality for supertrees}
\author{Hongyu Wang$^a$, \quad Xinmin Hou$^{a,b,c}$\footnote{Email: xmhou@ustc.edu.cn (X. Hou), why71@mail.ustc.edu.cn (H. Wang)}\\
\small $^{a}$ School of Mathematical Sciences\\
\small University of Science and Technology of China, Hefei, Anhui 230026, China.\\
\small  $^{b}$ CAS Key Laboratory of Wu Wen-Tsun Mathematics\\
\small University of Science and Technology of China, Hefei, Anhui 230026, China.\\
\small$^c$ Hefei National Laboratory\\
\small University of Science and Technology of China, Hefei 230088, Anhui, China
}

\date{}

\maketitle

\begin{abstract}
The Faber–Krahn inequality states that the first Dirichlet eigenvalue among all bounded domains is no less than a Euclidean ball with the same volume in $\mathbb{R}^n$~\cite{Chavel FB}. Bıyıkoğlu and Leydold~(J. Comb. Theory, Ser. B., 2007) demonstrated that the Faber–Krahn inequality also holds for the class of trees with boundary with the same degree sequence and characterized the unique extremal tree. Bıyıkoğlu and Leydold (2007) also posed a question as follows: Give a characterization of all graphs in a given class $\mathcal{C}$ with the Faber–Krahn property.
In this paper, we address this question specifically for $k$-uniform supertrees with boundary. 
We introduce a spiral-like ordering (SLO-ordering) of vertices for supertrees, an extension of the SLO-ordering for trees initially proposed by  Pruss [ Duke Math. J., 1998], and prove that the SLO-supertree has the Faber-Krahn property among all supertrees with a given degree sequence. Furthermore, among degree sequences that have a minimum degree $d$ for interior vertices, the SLO-supertree with degree sequence $(d,\ldots,d, d', 1, \dots, 1)$ possesses  the Faber-Krahn property. 
\end{abstract}

\section{Introduction}

The Faber–Krahn inequality is a well-known result on the Riemannian manifolds, which states that the first Dirichlet eigenvalue among all bounded domains is no less than that of a Euclidean ball with the same volume in $\mathbb{R}^n$~\cite{Chavel FB}. This principle was initially conjectured by Lord Rayleigh and later independently proven by Faber and Krahn.
%The graph Laplacian can be regarded as the discrete analogue of the continuous Laplace–Beltrami operator on manifolds. 
%This classical result asserts that balls uniquely minimize the first eigenvalue of the Dirichlet Laplacian among sets with given volume~\cite{BPV15}. 
The inequality has been extended and refined in various ways, including sharp quantitative forms~\cite{BPV15} and applications to different classes of domains and operators~\cite{CCLP24}, and  even to graphs and discrete structures. 
Friedman~\cite{Friedman geometric aspects} introduced the concept of a `graph with boundary' and formulated the Dirichlet and Neumann eigenvalue problems for graphs. 

A graph with boundary $G=\left(V^{\circ}\cup\partial V, E^{\circ}\cup\partial E\right)$ consists of a set of interior vertices $V^\circ$, boundary vertices $\partial V$, and boundary edges $\partial E$ that connect interior vertices to boundary vertices, along with interior edges $E^\circ$~\cite{FC Spectral Graph, Friedman geometric aspects}. 
Leydold~\cite{Leydold FK regular trees1, Leydold FK regular trees2} proved the Faber–Krahn type inequality for regular trees and gave a complete characterization of extremal trees. Pruss~\cite{Pruss FK regular trees} also studied the Faber–Krahn inequality for regular trees and relative questions. Bıyıkoğlu and Leydold~\cite{Biyikoğlu Leydold FK trees} demonstrated that the Faber–Krahn inequality also holds for the class of trees with the same degree sequence. The unique extremal tree has a spiral-like ordering, i.e., it is a ball approximation.
Zhang, Zhang, and Zhang~\cite{FK unicyclic} extended the previous results to the realm of unicyclic graphs. They established that the Faber-Krahn type inequality holds for unicyclic graphs with a specified graphic unicyclic degree sequence, subject to minor conditions. 
Bıyıkoğlu and Leydold also proposed the following question in~\cite{Biyikoğlu Leydold FK trees}.

\begin{prob}[\cite{Biyikoğlu Leydold FK trees}]\label{PROB: p1}
Give a characterization of all graphs in a given class $\mathcal{C}$ with the Faber–Krahn property, i.e., characterize those graphs in $\mathcal{C}$ which have minimal first Dirichlet eigenvalue for a given
 “volume".
\end{prob}

In this article, we explore Problem~\ref{PROB: p1} in the context of hypergraphs.

The rest of this article is arranged as follows. In Section 2, we present some notations and state our main theorems. In Section 3, we discuss foundational concepts and provide several lemmas that support our main theorem.  The proof of the main theorems will be given in Section 4. 

\section{Notations and main results}

A hypergraph $\mathcal{G}$ is defined as a pair $\mathcal{G}=(V, E)$, where $V$ represents  a set of vertices and $E$ denotes a set of non-empty subsets of $V$, known as edges or hyperedges. A hypergraph is considered as $k$-uniform if every  edge in $E$ contains exactly $k$ vertices. 
Let $\mathcal{G}=(V, E)$ be a hypergraph without multiple edges. 
%and with vertex set $V$ and edge set $E=\left\{e_1, e_2, \ldots, e_m\right\}$. 
For a vertex $v \in V(\G)$, the \textit{degree} of $v$ in $\G$ is defined as the number of edges that include $v$, represented as $d(v)$. A vertex of degree one is called a \textit{pendent vertex}. The list of degrees of vertices in hypergraph $\G$ is called the \textit{degree sequence} of $\G$. 
For distinct vertices $u_1,\ldots, u_{p+1}$ and distinct edges $e_1, e_2, \ldots, e_p$ of $\G$, a vertex-edge alternative sequence $(u_1, e_1, u_2, e_2, \ldots, u_p, e_p, u_{p+1})$ is a \textit{path} $P$ in $\G$ from $u_1$ to $u_{p+1}$ if $u_i, u_{i+1} \in e_i$ for $i=1,2, \ldots, p$. If $u_1=u_{p+1}$, then the path $P$ is called a \textit{cycle}. 
A hypergraph $\G$ is \textit{connected} if there is a path between  any two vertices $u$ and $v$ within $\G$. The \textit{distance} between two vertices $u$ and $v$, denoted as $dist(u, v)$, represents the shortest path length connecting  $u$ and $v$.  Two vertices $u, v\in V$ are considered \textit{adjacent} if they share an edge, denoted by $u\sim v$. Let $N(u)$ be the set of all vertices in $V$ adjacent to $u\in V$.

A connected and acyclic hypergraph is called a \textit{supertree}. This definition was initially proposed  by Li, Shao, and Qi~\cite{k-uniform supertrees}. According to this definition, we have the following observation. 
\begin{obs}\label{OBS: o1}
For each $k$-uniform supertree $\G$, any two edges share at most one common vertex, and $|E(\G)|=\frac{|V(G)-1|}{k-1}$ (Two 3-uniform supertrees have been shown in Figure~\ref{fig:enter-label}).
\end{obs}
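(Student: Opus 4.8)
The plan is to prove the two assertions separately, each as a direct consequence of the two defining properties of a $k$-uniform supertree: connectedness and acyclicity. For the first assertion, I would argue by contradiction: if two distinct edges $e$ and $f$ shared two distinct vertices $u$ and $v$, then $(u,e,v,f,u)$ would be a cycle in $\G$ (take $p=2$, $u_1=u_3=u$, $u_2=v$, $e_1=e$, $e_2=f$: the non-repeated vertices $u,v$ are distinct, the edges $e,f$ are distinct, and each consecutive pair of vertices lies in the corresponding edge), contradicting acyclicity. Hence any two edges meet in at most one vertex.

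For the counting identity, the key step is to pass to the bipartite incidence graph $B$ of $\G$, whose vertex set is $V(\G)\cup E(\G)$ (disjoint union) and in which $x\in V(\G)$ is joined to $e\in E(\G)$ exactly when $x\in e$. I claim $B$ is a tree. It is connected because $\G$ is. For acyclicity, any cycle of the bipartite graph $B$ has the form $x_0,e_0,x_1,e_1,\dots,x_{t-1},e_{t-1},x_0$ with $t\ge 2$, the $x_i\in V(\G)$ pairwise distinct, and the $e_i\in E(\G)$ pairwise distinct; but this is precisely a cycle of $\G$ in the sense of the definition above (the case $t=2$ being exactly the configuration already excluded by the first assertion), which contradicts acyclicity. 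Therefore $B$ is a tree, so $|E(B)|=|V(B)|-1$.

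It then remains to count both sides: $|V(B)|=|V(\G)|+|E(\G)|$ since $B$ is bipartite between $V(\G)$ and $E(\G)$, while $|E(B)|=k\,|E(\G)|$ since $\G$ is $k$-uniform (each edge of $\G$ contributes exactly $k$ incidences). Substituting into $|E(B)|=|V(B)|-1$ yields $k\,|E(\G)|=|V(\G)|+|E(\G)|-1$, which rearranges to $|E(\G)|=\frac{|V(\G)|-1}{k-1}$. I do not expect a genuine obstacle in this argument; the only delicate point is confirming that the shortest cycles of $B$ (the length-four ones) correspond to cycles of $\G$ under the paper's definition, and this is exactly what the first assertion secures. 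A self-contained alternative would be induction on $|E(\G)|$, removing at each step a pendant edge (an edge containing $k-1$ vertices of degree one) and invoking the induction hypothesis on the remaining supertree; the extra work there is to show that such a pendant edge always exists, e.g.\ by taking the last edge of a longest path in $\G$, and it is precisely this lemma that the incidence-graph viewpoint avoids.
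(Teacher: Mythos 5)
The paper offers no proof of Observation~\ref{OBS: o1}; it is simply asserted as a consequence of the definition of a supertree. Your argument is correct and fills this in cleanly: the first assertion follows by exhibiting $(u,e,v,f,u)$ as a cycle with $p=2$ in the paper's vertex--edge alternating-sequence sense, and passing to the bipartite incidence graph $B$ (which you correctly show is a tree) gives $k\,|E(\G)| = |V(\G)| + |E(\G)| - 1$, equivalently $|E(\G)| = (|V(\G)|-1)/(k-1)$. One small point of care worth noting: the paper's definition of a cycle is worded self-contradictorily, asking both that $u_1,\dots,u_{p+1}$ be distinct and that $u_1=u_{p+1}$; you tacitly, and correctly, use the intended reading that $u_1,\dots,u_p$ are pairwise distinct and the final vertex closes up. With that reading, your identification of cycles in $B$ with cycles of $\G$ is exactly right, including the length-four case, which you rightly observe is the same configuration as the first assertion, so the two halves of the observation are genuinely linked rather than independent. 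Your suggested alternative by induction on pendant edges would also work but requires the existence lemma you flag; the incidence-graph route avoids that.
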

In this article, we define a supertree  $\G=\left(V^\circ \cup \partial V, E^\circ \cup \partial E\right)$ with {\em boundary} as a supertree with  boundary vertex set $\partial V$ consisting of all degree one vertices. 

The \textit{adjacency matrix} $A_{\G}=(a_{ij})$ of a hypergraph $\G$, introduced by Banerjee in~\cite{Banerjee spectrum of hypergraphs} (a similar definition also was defined  by Rodr\'iguez in~\cite{Rodriguez Laplacian hypergraphs}) is defined as follows:
$$a_{ij}= \begin{cases}\sum\limits_{{e \in E}\atop{\{i,j\}\subseteq e}} \frac{1}{|e|-1}, & \text { if } i \sim j, \\ {\ \ 0}, & \text { otherwise, }\end{cases}$$
where we write $[n]=\{1,2,\dots,n\}$ for the vertex set of $\G$.
For a vertex $i\in [n]$, it is straightforward  to confirm that the degree $d(i)$ equals the sum of  entries in row $i$ of  the adjacent matrix $A_{\G}$, i.e., $d(i)=\sum_{j=1}^n a_{i j}$. The \textit{Laplacian matrix} of the hypergraph  $\G$ is defined as $L_{\G}=D_{\G}-A_{\G}$, where $D_{\G}$ is the diagonal matrix whose diagonal entries are the degrees of the vertices, specifically  $D_{\G}=\operatorname{diag}\left(d(i)\right)_{i\in [n]}$.
From the definition of $D_{\G}$, we have 
$$(L_{\G})_{ij}= \begin{cases}d(i), & \text { if } i = j, \\ \sum\limits_{e \in E\atop \{i,j\}\subseteq e} \frac{-1}{|e|-1}, & \text { if } i \sim j, \\ 0, & \text { otherwise. }\end{cases}$$
A real number $\lambda$ is defined as the \textit{Dirichlet eigenvalue} of the hypergraph  $\G$ if there exists a non-zero real function $f$ on $V(\G)$ such that:
$$
\begin{cases}L_{\G} f(u)=\lambda f(u) & u \in V^\circ, \\ f(u)=0 & u \in \partial V .\end{cases}
$$
The function $f$ is referred to as an eigenfunction corresponding to $\lambda$.
The associate \textit{Rayleigh quotient} $\mathcal{R}_{L_{\mathcal{G}}}(f)$ of a function $f: V \rightarrow \mathbb{R}$ is defined as
$$
\mathcal{R}_{L_{\mathcal{G}}}(f)=\frac{\left\langle L_{\mathcal{G}} f, f\right\rangle}{\langle f, f\rangle}=\frac{\sum_{i\sim j} \sum_{e \in E\atop \{i, j\} \subseteq e} \frac{1}{|e|-1}(f(i)-f(j))^2}{\sum_{i \in V} f(i)^2}.
$$
Specifically, if $\G$ is a $k$-uniform supertree then any two vertices can be contained in at most one edge. Therefore, for a  $k$-uniform supertree $\G$,
$$
\mathcal{R}_{L_{\mathcal{G}}}(f)=\frac{\left\langle L_{\mathcal{G}} f, f\right\rangle}{\langle f, f\rangle}=\frac{\sum_{i\sim j}\frac{1}{k-1}(f(i)-f(j))^2}{\sum_{i \in V} f(i)^2}.
$$
Let $\lambda(\G)$ be the first Dirichlet eigenvalue of $\G$. Then 
we have
$$
\lambda(\G)=\min_{f \in S} \mathcal{R}_{L_{\G}}(f)=\min_{f \in S} \frac{\langle L_{\G} f, f\rangle}{\langle f, f\rangle},
$$
where $S$ is the set of all non-zero real functions on $V(\G)$ with the constraint $\left.f\right|_{\partial V}=0$. Furthermore, equality holds  if and only if $f$ is an eigenfunction.

\begin{myDef}
    A hypergraph with boundary has the Faber-Krahn property if it has minimal first Dirichlet eigenvalue among all hypergraphs with the same 'volume' in a particular class.
\end{myDef}

In this article, the volume is referred to a given supertree degree sequence $\pi=(d_1,\ldots,d_n)$, or a given interior vertex number and a minimum degree of interior vertices. 
%We consider the following hypergraph classes:  
Let
$$\mathcal{T}_{\pi}=\{\G\,|\, \G\text{ is a $k$-uniform supertree with }\pi\text{ as its degree sequence}\},$$
$$\mathcal{T}^{n,n_0}=\{\G\, |\, \G\text{ is a $k$-uniform supertree with }|V|=n\text{ and }|V^\circ|=n_0\},$$
and 
$$\mathcal{T}^{n,n_0}_d=\{\G\in \mathcal{T}^{n,n_0}\, |\, d(v)\geq d\text{ for all }v\in V^\circ\}.$$

For a supertree $\G$ with root $v_0$, the \textit{height} $h(v)$ of a vertex $v$ is defined by $h(v) = \dist(v, v_0)$. 
We call a vertex $u$ a {\em child} of vertex $v$ if $u$ and $v$ are adjacent in $\G$ and $h(u)=h(v)+1$, and $v$ is called the {\em parent} of $u$ (the parent of $u$ is unique in a supertree).

\begin{myDef}[SLO-ordering supertree] Let $\mathcal{G}=(V^\circ \cup \partial V, E^\circ \cup \partial E)$ be a $k$-uniform supertree with root $v_0$. A well-ordering $\prec$ of the vertices is called a spiral-like ordering (SLO-ordering for short) if the following holds for all vertices in $\mathcal{G}$:
\begin{itemize}
    \item[(S1)] $u \prec v$ implies $h(u) \leq h(v)$. 
    \item[(S2)] Suppose $u$ and $v$ are the parents of $u_1$ and $v_1$, respectively. If $u \prec v$, then $u_1 \prec v_1$.
    %$\left\{u, u_1\right\} \subseteq e_1 \in E(\mathcal{G})$ and $\left\{v, v_1\right\} \subseteq e_2 \in E(\G)$ such that $u \prec v$, %$h(u_1)=h\left(u\right)+1$, and $h(v_1)=h\left(v\right)+1$ (i.e., $u$ and $v$ are the parents of $u_1$ and $v_1$, respectively), then $u_1 %\prec v_1$.
    \item[(S3)] If $u \prec v$ and $u\in \partial V$, then $v\in \partial V$.
    \item[(S4)] Suppose $u_1 \prec u_2 \prec \cdots \prec u_k$ for an edge $e=\left\{u_1, u_2, \ldots, u_k\right\} \in E(\G)$, then there exists no vertex $v \in V(\G) \backslash e$ such that $u_i \prec v \prec u_{i+1}$ for $2 \leq i \leq k-1$.
\item[(S5)] For any $u,v\in V^\circ$, $u \prec v$ implies $d(u) \leq d(v)$.
\end{itemize}
\end{myDef}
\noindent{\bf Remark A:} The rule (S4) implies that  for an edge $e=\left\{u_1, u_2, \ldots, u_k\right\}\in E(\G)$ with $u_1 \prec u_2 \prec \cdots \prec u_k$, 
the vertices in $e$ except $u_1$ are consecutive in this SLO-ordering.

\begin{figure}
    \centering
    \includegraphics[width=14cm]{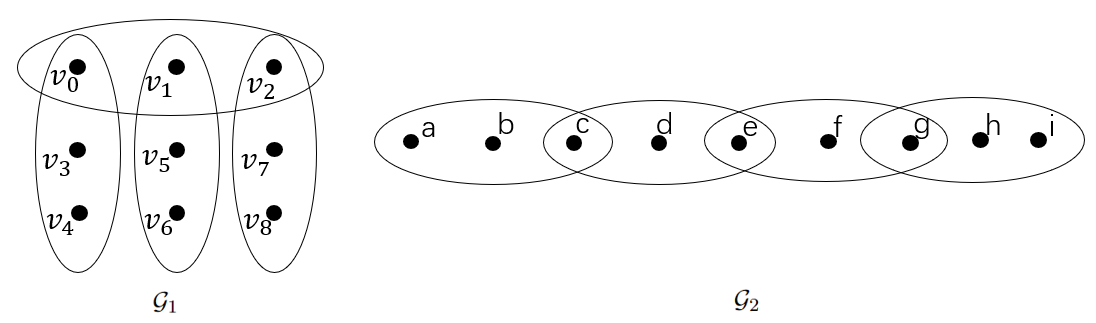}
    \caption{$\G_1$ is an SLO-ordering supertree, however, $\G_2$ is not.}
    \label{fig:enter-label}
\end{figure}
The two 3-uniform supertrees  $\G_1$ and $\G_2$ as shown in Figure 1 have the same degree sequence $(3,3,3,1,1,1,1,1,1)$. The supertree $\G_1$ has an SLO-ordering $v_0\prec v_1\prec v_2\prec\dots\prec v_8$. However, we cannot find an SLO-ordering in $\G_2$. Suppose to the contrary that $\G_2$ has an SLO-ordering $v_0\prec v_1\prec v_2\prec\dots\prec v_8$. If we choose $c$ as the root $v_0$, then vertices $a,b,d,e$ must be ordered before vertex $g$ according to (S1). However, this leads to a contradiction to (S3) (as boundary vertices must be ordered after interior vertices according to (S3)). The same discussion for the case $v_0=g$. Finally, if we choose $v_0=e$, then according to (S1) and (S3), we must have  $\{v_1, v_2\}=\{c,g\}$ and $\{v_3, v_4\}=\{d, f\}$. Without loss of generality, assume $v_1=c$ and $v_2=g$. Then for edge $e_0=\{e, c, d\}$, we have $c\prec g\prec d$, a contradict to (S4) again.

We refer to supertrees that have SLO-ordering of their vertices as \textit{SLO-supertrees}.
The following are our main results.

\begin{thm}\label{mainthm}
    A $k$-uniform supertree $\G$ with degree sequence $\pi$ has the Faber-Krahn property within the class $\mathcal{T}_{\pi}$ if and only if $\G$ has an SLO-ordering. Furthermore, $\G$ is uniquely determined  up to isomorphism.
\end{thm}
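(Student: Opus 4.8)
The plan is to follow the strategy of Bıyıkoğlu--Leydold for trees, adapted to the hypergraph setting, and to organize the argument around a single extremal supertree. First I would record the easy preliminaries: since $\mathcal{T}_\pi$ is finite up to isomorphism, an extremal supertree $\G$ realizing $\lambda(\G)=\min_{\G'\in\mathcal{T}_\pi}\lambda(\G')$ exists; and by a Perron--Frobenius type argument applied to the principal submatrix of $L_\G$ on $V^\circ$ (or, more elementarily, by noting that $\mathcal R_{L_\G}(|f|)\le\mathcal R_{L_\G}(f)$ and using connectedness to force strict positivity) one may take a first Dirichlet eigenfunction $f$ with $f>0$ on $V^\circ$ and $f=0$ on $\partial V$. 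Pick a root $v_0\in V^\circ$ with $f(v_0)$ maximum. Here ``child'', ``parent'' and height $h(\cdot)$ are taken with respect to $v_0$, and Observation~\ref{OBS: o1} lets us speak of the sub-supertree hanging at an interior vertex through a given incident edge.

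The heart of the argument is: \emph{any extremal $\G$ admits an SLO-ordering}. I would prove this by a sequence of ``surgery'' lemmas (the ones to be placed in Section~3), each comparing Rayleigh quotients before and after relocating a branch, keeping the eigenfunction $f$ of the extremal graph fixed and only performing moves that preserve the multiset of $f$-values. Concretely: (i) a \emph{level-sorting} lemma giving $f(u)>f(v)\Rightarrow h(u)\le h(v)$ and the parent-monotonicity needed for (S2) --- if not, detaching the sub-supertree at the ``wrong'' vertex and reattaching it lower, toward larger $f$, does not increase $\langle L_\G f,f\rangle$ while $\langle f,f\rangle$ is unchanged, contradicting extremality unless the ordering is already correct; (ii) an \emph{edge-packing} lemma for (S4): within a size-$k$ edge $e=\{u_1,\dots,u_k\}$ with $u_1\prec\cdots\prec u_k$, the non-root vertices $u_2,\dots,u_k$ must occupy consecutive positions, else a local exchange of two vertices (one inside $e$, one squeezed between them) strictly lowers $\mathcal R_{L_\G}$; (iii) a \emph{degree-sorting} lemma for (S5): if $u\prec v$ but $d(u)>d(v)$, transplanting an incident edge (with its sub-supertree) from $u$ to $v$ does not increase $\lambda$; and (iv) (S3) is automatic, since boundary vertices carry $f=0$ while interior vertices carry $f>0$, so ordering by decreasing $f$-value (with a fixed tie-break) puts all of $\partial V$ last. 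Each move uses the $1/(k-1)$ weights explicitly and one must check that the relabelled function still vanishes on the (possibly changed) boundary, i.e.\ that the test function is admissible. The delicate point is that enforcing one rule may disturb another; the resolution is the standard one --- freeze $f$, use only value-preserving surgeries, and observe that the partial order ``$f(u)>f(v)$'' together with the chosen tie-breaking recipe is simultaneously consistent with (S1)--(S5), so after finitely many moves all five hold at once.

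Next I would prove the combinatorial uniqueness: \emph{any two SLO-supertrees with degree sequence $\pi$ (and $k$ fixed) are isomorphic.} This is a level-by-level induction on the height. Rules (S1), (S2), (S4) and Remark~A force, at each height, how the consecutive blocks of an SLO-ordering partition into edges and how many children each interior vertex spawns, while (S5) pins down which degree value sits at each position among interior vertices; hence the rooted labelled structure is uniquely determined, and the supertree is unique up to isomorphism --- call it $T_\pi$. Combining the three ingredients closes the theorem: an extremal $\G\in\mathcal T_\pi$ is an SLO-supertree by the surgery argument, all SLO-supertrees are isomorphic to $T_\pi$, hence $T_\pi$ is the \emph{unique} extremal supertree; in particular a supertree in $\mathcal T_\pi$ has the Faber--Krahn property iff it has an SLO-ordering, and it is determined up to isomorphism.

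I expect the main obstacle to be the surgery lemmas (i)--(iii) in the hypergraph setting. Unlike in graphs, ``moving a branch'' means relocating a whole sub-supertree attached through a shared vertex of a size-$k$ edge, so the bookkeeping in $\langle L_\G f,f\rangle=\tfrac1{k-1}\sum_{i\sim j}(f(i)-f(j))^2$ involves several cross terms and the weights $1/(k-1)$, and one must be careful that the post-surgery graph is still a $k$-uniform supertree with the prescribed degree sequence. A secondary technical point, needed to upgrade ``does not increase'' to the rigidity behind uniqueness, is a short argument that in the equality case the eigenfunction is strictly monotone along the ordering on interior vertices except where the edge structure forces equal values; this is what guarantees that the extremal graph is forced all the way down to $T_\pi$ rather than merely ``one of possibly several'' SLO-supertrees.
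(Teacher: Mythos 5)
Your overall strategy --- Rayleigh-quotient surgeries applied to an extremal supertree with a fixed positive eigenfunction, followed by a purely combinatorial uniqueness argument for SLO-supertrees with a given $\pi$ --- is exactly the route the paper takes. But two concrete points in your sketch would not survive being worked out.

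First, your degree-sorting surgery (iii) as stated is not degree-preserving. If $u\prec v$ but $d(u)>d(v)$, moving \emph{one} incident edge from $u$ to $v$ produces degrees $d(u)-1$ and $d(v)+1$, which generally changes the multiset of degrees, so the resulting hypergraph leaves $\mathcal T_\pi$ and the Faber--Krahn comparison gives no contradiction. The correct move (the paper's shifting operation in Lemma~\ref{B5}) relocates exactly $\delta=d(u)-d(v)$ edges, so that the two degrees are swapped and $\pi$ is preserved. One must also confirm that the relocated edges can be chosen among the edges from $u$ to its children (so that acyclicity is maintained and no moved edge already contains $v$); this uses the root-and-height structure and the $f$-monotonicity.

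Second, you place the strict-inequality issue in the wrong spot. Strict monotonicity of the eigenfunction is not needed for \emph{uniqueness} --- once you know any two SLO-supertrees with the same $\pi$ are isomorphic (your level-by-level induction, which matches the paper's relabeling isomorphism), uniqueness of the extremal graph follows for free. Where strictness is genuinely required is in every one of your surgery lemmas: ``does not increase $\lambda$'' gives no contradiction with the assumed minimality of $\lambda(\G)$. The paper obtains the needed strict decrease in two layers: Lemma~\ref{switching}/\ref{shifting} upgrade $\le$ to $<$ whenever a hypothesis is a strict inequality, and Lemma~\ref{child} (every interior vertex has a child with strictly smaller $f$-value, proven from $L_\G f=\lambda f$ with $\lambda>0$) supplies such a strict inequality whenever a surgery is performed. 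Without an analogue of Lemma~\ref{child} your arguments for (S1), (S2), (S4), (S5) all stall at a non-strict inequality. Relatedly, your ``after finitely many value-preserving moves all five hold at once'' framing is not how the argument should close: one does not iteratively transform $\G$, one shows directly (by contradiction, move by move) that a single relabeling of the extremal $\G$ already satisfies (S1)--(S5).
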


\begin{thm}\label{mainthm2}
    A $k$-uniform supertree $\G$ exhibits the Faber-Krahn property within the class $\mathcal{T}_d^{n,n_0}$ if and only if it is an SLO-supertree characterized by having at most one interior vertex with degree $d'$ greater than $d$, while all other interior vertices possess degree $d$. Furthermore, this supertree $\G$ is unique up to isomorphism.
\end{thm}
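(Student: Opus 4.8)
The plan is to reduce Theorem~\ref{mainthm2} to Theorem~\ref{mainthm} together with an optimization over degree sequences. Note first that $m:=|E(\G)|=(n-1)/(k-1)$ is constant on $\T^{n,n_0}_d$, and since $\sum_v d(v)=km$ while every boundary vertex has degree $1$, the sum $D:=km-(n-n_0)$ of the interior degrees is constant too; thus the interior part of the degree sequence of any member of $\T^{n,n_0}_d$ is a partition of $D$ into $n_0$ parts, each at least $d$. Among all such partitions, the one with a single part equal to $d':=D-(n_0-1)d$ (one checks $d\leq d'\leq m$, so this partition is realizable as a supertree) and all other parts equal to $d$ is the unique maximum in the majorization order; write $\pi_{opt}$ for the corresponding supertree degree sequence and $\G_{opt}$ for the (by Theorem~\ref{mainthm}, unique up to isomorphism) SLO-supertree with degree sequence $\pi_{opt}$. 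It then suffices to prove that every $\G\in\T^{n,n_0}_d$ not isomorphic to $\G_{opt}$ satisfies $\lambda(\G)>\lambda(\G_{opt})$, since the ``if'', ``only if'' and uniqueness assertions of the theorem are immediate consequences of this dichotomy.

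To prove the dichotomy, fix $\G\in\T^{n,n_0}_d$ with degree sequence $\pi$. If $\pi=\pi_{opt}$ but $\G\not\cong\G_{opt}$, then $\G$ is not an SLO-supertree and $\lambda(\G)>\lambda(\G_{opt})$ by Theorem~\ref{mainthm}. If $\pi\neq\pi_{opt}$, then, because $\T_\pi\subseteq\T^{n,n_0}_d$, Theorem~\ref{mainthm} lets us replace $\G$ by the SLO-supertree $\G_\pi$, and it is enough to produce $\G'\in\T^{n,n_0}_d$ with $\lambda(\G')<\lambda(\G_\pi)$ and then iterate. Since $\pi\neq\pi_{opt}$, at least two interior vertices of $\G_\pi$ have degree exceeding $d$, and because interior vertices appear in nondecreasing degree order by (S5), the last two interior vertices $v_s\prec v_t$ in the SLO-order both have degree $>d$; in particular $d(v_s)\geq d+1\geq 3$ and $d(v_t)\geq d(v_s)$. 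The combinatorial crux is that $v_s$ carries a \emph{pendant edge} $e_p=\{v_s,p_1,\dots,p_{k-1}\}$ with all $p_i$ of degree $1$; I would prove this from (S1)--(S5) and Remark~A by noting that every edge at $v_s$ other than the one toward its parent has $v_s$ as its unique vertex of smallest height, so that its other $k-1$ vertices lie above $v_s$ and therefore come after $v_s$ in the order, whence --- $v_t$ being the only interior vertex after $v_s$ --- at most one such edge meets $V^\circ$ and the remaining $\geq d(v_s)-2\geq 1$ of them are pendant edges.

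Next I would form $\G'$ from $\G_\pi$ by deleting $e_p$ and adding $e_p':=\{v_t,p_1,\dots,p_{k-1}\}$: this is again a $k$-uniform supertree, still with $n_0$ interior vertices (since $d(v_s)-1\geq d\geq 2$), so $\G'\in\T^{n,n_0}_d$, and its degree sequence $\pi'$ is $\pi$ with $d(v_s)$ decreased and $d(v_t)$ increased by $1$ --- an elementary transfer strictly up the majorization order. Let $f$ be the positive first Dirichlet eigenfunction of $\G_\pi$ (unique up to scaling by Perron--Frobenius); by the monotonicity of the first eigenfunction along an SLO-ordering (a lemma I expect to have available from Section~3) one has $f(v_s)\geq f(v_t)>0$. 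As $\G_\pi$ and $\G'$ share the same boundary vertices and differ only in the edge $e_p$, $f$ is a valid trial function on $\G'$ with the same $\langle f,f\rangle$, and since $f(p_i)=0$ this edge contributes $f(v_s)^2$ to $\langle L_{\G_\pi}f,f\rangle$ but $f(v_t)^2$ to $\langle L_{\G'}f,f\rangle$; hence
$$\mathcal{R}_{L_{\G'}}(f)=\lambda(\G_\pi)-\frac{f(v_s)^2-f(v_t)^2}{\langle f,f\rangle}\leq\lambda(\G_\pi),$$
so $\lambda(\G_{\pi'})\leq\lambda(\G')\leq\mathcal{R}_{L_{\G'}}(f)\leq\lambda(\G_\pi)$. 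If $f(v_s)>f(v_t)$ the last inequality is already strict; if $f(v_s)=f(v_t)$, then equality throughout would force $f$ to be a first eigenfunction of $\G'$, which is impossible because $(L_{\G'}f)(v_s)=(L_{\G_\pi}f)(v_s)-f(v_s)=(\lambda(\G_\pi)-1)f(v_s)\neq\lambda(\G_\pi)f(v_s)$ (as $f(v_s)>0$). Thus $\lambda(\G_{\pi'})<\lambda(\G_\pi)$; iterating the transfer --- the poset of feasible interior partitions is finite and each step rises strictly in the majorization order, so the process terminates, necessarily at $\pi_{opt}$ --- gives $\lambda(\G_\pi)>\lambda(\G_{opt})$, and the dichotomy follows.

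I expect the main obstacle to be the combinatorial lemma that the chosen $v_s$ always carries a pendant edge: its proof has to exploit the SLO-axioms carefully --- that interior vertices appear in nondecreasing degree order, that every vertex above the last interior vertex is a boundary vertex, and the consecutiveness statement of Remark~A --- and to dispose of the degenerate configurations (for instance $n_0=2$, or $v_t$ adjacent to $v_s$). The remaining steps --- the energy bookkeeping, the equality-case argument, and the termination of the majorization chain --- should be routine once Theorem~\ref{mainthm} and the positivity and monotonicity of the first Dirichlet eigenfunction on an SLO-supertree are in hand.
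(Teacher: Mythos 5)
Your proposal is correct, but it takes a genuinely different route from the paper's. The paper reduces the degree-sequence optimization to classical majorization machinery: Lemma~\ref{majorizate} shows that a single unit transformation on the interior degree sequence strictly lowers $\lambda$ (via the shifting-operation of Lemma~\ref{shifting}, applied to the edge through the last child of the chosen vertex, using Corollary~\ref{COR:lastchild}); Lemma~\ref{seqtransform} (Marshall) factors any majorization step into unit transformations; Corollary~\ref{majorizate2} combines these, and the proof of Theorem~\ref{mainthm2} is then two lines. You instead construct an explicit chain of improving moves --- repeatedly locating the last two interior vertices $v_s\prec v_t$ of degree $>d$, identifying a pendant edge at $v_s$, and transferring it to $v_t$ --- and verify the strict eigenvalue drop by a direct energy computation (which is in fact exactly the $F=\{e_p\}$, $f(p_i)=0$ special case of Lemma~\ref{shifting}, so you could shorten by citing it). Your combinatorial lemma that $v_s$ always carries a pendant edge is the genuinely new ingredient; it replaces the paper's use of the last-child inequality and works because every non-parent edge at $v_s$ has its other $k-1$ vertices above $v_s$, hence after $v_s$ in the SLO-order, hence all boundary unless one of them is $v_t$, and at most one edge at $v_s$ can contain $v_t$. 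The paper's approach buys modularity and brevity by invoking the majorization decomposition as a black box; yours buys a self-contained, constructive argument at the cost of an extra combinatorial step. One small correction: the eigenfunction monotonicity you invoke is Lemma~\ref{f(v)<f(u)} in Section~4, not Section~3, and it is stated for the Faber--Krahn optimizer, which is legitimately what $\G_\pi$ is by Theorem~\ref{mainthm}.
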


\noindent{\bf Remark B:} Theorem~\ref{mainthm} demonstrates that the SLO-supertree has the Faber-Krahn property among all supertrees with a given degree sequence $\pi$. Additionally, among degree sequences that have a minimum degree $d$ for interior vertices, the SLO-supertree with degree sequence $(d,\ldots,d, d', 1, \dots, 1)$ possesses  the Faber-Krahn property.

The rest of this article is arranged as follows. In Section 3, we present some preliminaries and lemmas. The proofs of Theorems~\ref{mainthm} and \ref{mainthm2} will be provided in Sections 4 and 5, respectively. 
%Finally, remarks and discussions will be included in the concluding section.

\section{Preliminaries and lemmas}
First, we give the following property of the first Dirichlet eigenvalue of a supertree with boundary.
\begin{lem}\label{simple}
     Let $\mathcal{G}=\left(V^\circ \cup \partial V, E^\circ \cup \partial E\right)$ be a $k$-uniform supertree with boundary. Then
     \begin{itemize}
         \item[(1)] The first Dirichlet eigenvalue $\lambda(G)$ is a positive simple eigenvalue.
         \item[(2)] An eigenfunction $f$ of the eigenvalue $\lambda(G)$ is either positive or negative on all interior vertices of $G$.
     \end{itemize}
\end{lem}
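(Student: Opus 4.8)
The plan is to convert the Dirichlet eigenvalue problem into an ordinary eigenvalue problem for a principal submatrix of $L_{\G}$ and then invoke the Perron–Frobenius theorem. We may assume $V^\circ\neq\emptyset$; let $L^\circ$ denote the $|V^\circ|\times|V^\circ|$ principal submatrix $(L_{\G})_{V^\circ\times V^\circ}$. Since any eigenfunction $f$ satisfies $f|_{\partial V}=0$, for every $u\in V^\circ$ we have $(L_{\G}f)(u)=d(u)f(u)-\sum_{v\sim u,\,v\in V^\circ}a_{uv}f(v)=(L^\circ f|_{V^\circ})(u)$. Hence $\lambda$ is a Dirichlet eigenvalue of $\G$ with eigenfunction $f$ if and only if $\lambda$ is an eigenvalue of $L^\circ$ with eigenvector $f|_{V^\circ}$, and by the variational formula already recorded $\lambda(\G)=\lambda_{\min}(L^\circ)$. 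It therefore suffices to prove that $\lambda_{\min}(L^\circ)$ is positive and simple and that its eigenvector has constant sign.

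For positivity, extend any $g:V^\circ\to\mathbb{R}$ by $0$ on $\partial V$ to a function $\tilde g$ on $V(\G)$. Then $\langle L^\circ g,g\rangle=\langle L_{\G}\tilde g,\tilde g\rangle=\tfrac{1}{k-1}\sum_{i\sim j}(\tilde g(i)-\tilde g(j))^2\geq 0$, with equality only if $\tilde g$ is constant on the (unique) connected component of $\G$. Since $V^\circ\neq\emptyset$ the supertree has at least one edge, and an incidence count ($\sum_v d(v)=k|E|$ together with $|V|=(k-1)|E|+1$ from Observation~\ref{OBS: o1}) shows it must contain a degree-one vertex, so $\partial V\neq\emptyset$; thus $\tilde g\equiv 0$, i.e.\ $g=0$. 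Hence $L^\circ$ is positive definite and $\lambda(\G)=\lambda_{\min}(L^\circ)>0$.

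Next I would record the one structural fact needed: the graph on $V^\circ$ with edge set $\{uv: u\sim v,\ u,v\in V^\circ\}$ is connected. Given interior vertices $u,v$, take a path $(u=u_1,e_1,u_2,\ldots,e_p,u_{p+1}=v)$ in $\G$; each intermediate $u_i$ ($2\le i\le p$) lies in the two distinct edges $e_{i-1},e_i$, so $d(u_i)\ge 2$ and $u_i\in V^\circ$, while consecutive $u_i,u_{i+1}$ share $e_i$ and are adjacent, so the path already lies in the interior graph. Consequently, for any $c>\max_{u\in V^\circ}d(u)$ the matrix $M:=cI-L^\circ$ is entrywise nonnegative, symmetric, and irreducible, since its off-diagonal positivity pattern coincides with the edge set of this connected graph.

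Finally I would apply Perron–Frobenius: $\rho(M)$ is a simple eigenvalue of $M$ with a componentwise positive eigenvector, and it is the only eigenvalue admitting a nonnegative eigenvector. Translating via $\mu\mapsto c-\mu$, the eigenvalue $\lambda(\G)=\lambda_{\min}(L^\circ)=c-\rho(M)$ is simple and possesses a positive eigenvector; by the first paragraph it is then a simple Dirichlet eigenvalue of $\G$, and every Dirichlet eigenfunction $f$ of $\lambda(\G)$ is a nonzero scalar multiple of this positive eigenvector, hence strictly positive or strictly negative on all of $V^\circ$. This yields (1) and (2). The only genuinely hypergraph-specific input is the connectivity of the interior graph in the third paragraph; everything else is the classical Perron–Frobenius argument, so I expect the proof to be short once that observation is in hand, and I anticipate no serious obstacle.
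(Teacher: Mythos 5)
Your proof is correct, and it takes a genuinely different route from the paper. The paper argues variationally: expanding the quadratic form shows $\left\langle L_{\G}f,f\right\rangle\geq\left\langle L_{\G}|f|,|f|\right\rangle$, which combined with $\mathcal{R}_{\G}(f)\leq\mathcal{R}_{\G}(|f|)$ forces equality and hence a sign-definite $f$ on $V^\circ$ up to allowing zeros; a perturbation $\tilde f=f+\epsilon\chi_u$ at any interior zero $u$ adjacent to a positive vertex then strictly lowers the Rayleigh quotient, giving strict positivity; and simplicity follows because two sign-definite functions on $V^\circ$ cannot be orthogonal. You instead pass to the principal submatrix $L^\circ=(L_{\G})_{V^\circ\times V^\circ}$, isolate the clean hypergraph-specific fact that the interior graph on $V^\circ$ is connected (which the paper uses only implicitly when it finds an interior edge $\{u,v\}$ with $f(u)=0<f(v)$), and invoke Perron--Frobenius on $M=cI-L^\circ$ to get simplicity and the strictly-signed eigenvector in one stroke. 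Both approaches are sound; yours is shorter once Perron--Frobenius is on the table, while the paper's perturbation argument is self-contained and warms up the exact Rayleigh-quotient-comparison technique reused in Lemmas~\ref{switching} and \ref{shifting}. One small point worth a sentence in your write-up: the identification $\rho(M)=c-\lambda_{\min}(L^\circ)$ uses that for a symmetric entrywise-nonnegative matrix the Perron root equals the largest eigenvalue, i.e.\ $\lambda_{\max}(M)\geq|\lambda_{\min}(M)|$, which holds because replacing a minimizing vector $x$ by $|x|$ gives $|x|^\top M|x|\geq|x^\top Mx|$; you do not need $c$ to dominate $\lambda_{\max}(L^\circ)$, only $c>\max_{u\in V^\circ}d(u)$ to make $M$ nonnegative.
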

\begin{proof}
%The min-max principle says that$$\lambda_k(G)=\min _{|F|=f} \max_{f \in F} \mathcal{R}(f),$$ where the max is taken over all subsets $F$ of $k$ mutually perpendicular non-zero vectors. Furthermore, equality holds for $f$ in the above equations if and only if $f$ is an eigenfunction. Both principles are easy to check.
Let $f$ be an eigenfunction corresponding to $\lambda(\G)$. Then
%Let $\tilde{f}=|f|$. 
\begin{equation}\label{Eq: e1}
\begin{aligned}
     &\left<L_{\mathcal{G}}f,f\right>-\left<L_{\mathcal{G}}|f|,|f|\right>\\
     &\,\,= \sum_{\{u_i, u_j\}\subseteq e\atop e \in E}\frac{1}{k-1}(f(u_i)-f(u_j))^2-\sum_{\{u_i, u_j\}\subseteq e\atop e \in E}\frac{1}{k-1}(|f(u_i)|-|f(u_j)|)^2\\
     &\,\, = \sum_{\{u_i, u_j\}\subseteq e\atop e \in E}\frac{2}{k-1}(|f(u_i)||f(u_j)|-f(u_i)f(u_j))\\
     &\,\, \geq  0
\end{aligned}
\end{equation}
Together with $\mathcal{R}_{\mathcal{G}}(f)\leq\mathcal{R}_{\mathcal{G}}(|f|)$, we have $\left<L_{\mathcal{G}}f,f\right>=\left<L_{\mathcal{G}}|f|,|f|\right>$. By (\ref{Eq: e1}), we have  $f\geq 0$ or $f\leq 0$ on all interior vertices of $\G$.

Now suppose $f\ge 0$, and however, $f=0$ on some interior vertices. Since $\G$ is connected and $f\neq 0$, we can find two interior vertices   $\{u,v\}\subseteq e \in E$ with $f(u)=0$ and $f(v)>0$. Let $\tilde{f}=f+\epsilon \chi_u$, where $\epsilon>0$ is sufficiently small and $\chi_u$ is the characteristic function of $u$.
Then 
\begin{equation*}
\begin{aligned}
    \left<L_{\mathcal{G}}f,f\right>-\left<L_{\mathcal{G}}\tilde{f},\tilde{f}\right>
     =& \sum_{\{u, v_i\}\subseteq e\atop e \in E}\frac{1}{k-1}((0-f(v_i))^2-(\epsilon-f(v_i))^2)\\
     =& \sum_{\{u, v_i\}\subseteq e\atop e \in E}\frac{2\epsilon}{k-1}(f(v_i)-\epsilon)\\
     \geq& \frac{2\epsilon}{k-1}f(v)-\frac{2\epsilon}{k-1}(k-1)\epsilon\\
     =& \frac{2\epsilon}{k-1}(f(v)-(k-1)\epsilon)>0,
\end{aligned}
\end{equation*}
a contradiction to the smallest of $f$.
%Thus, $\tilde{f}$ has smaller Rayleigh quotient, contradiction! 
Therefore, $f$ is either positive or negative on all interior vertices of $\G$.

Since there are no two functions with a strict sign on interior vertices that can be orthogonal, it follows that the first eigenvalue has multiplicity one.
\end{proof}

 According to Lemma~\ref{simple},  the first Dirichlet eigenvalue of $\G$ has multiplicity one. Therefore, there exists only one eigenfunction $f$ of $\lambda(G)$ that satisfies $f(v)>0$ for $v \in V^\circ, f(u)=0$ for $u \in \partial V$ and $\|f\|=\left(\sum_{u\in V}f(u)^2\right)^{\frac{1}{2}}=1$. We call $f$ the {\it first Dirichlet eigenfunction} of $\G$.

\begin{myDef}[Switching-operation] 
For two edges $e_1$ and $e_2$ in a $k$-uniform hypergraph $\mathcal{G}$, let $U_1\subseteq e_1$ and $V_1\subseteq e_2$ with $|U_1|=|V_1|$. We define the {\em switching operation $e_1 \underset{\scriptscriptstyle V_1}{\stackrel{\scriptscriptstyle U_1}{\rightleftharpoons}} e_2$} as follows: replace $e_1$ with $e'_1=(e\backslash U_1)\cup V_1$ and replace $e_2$ with $e_2'=(e_2\backslash V_1)\cup U_1$.

	%For two edges $e=\left\{u_1, \ldots, u_k\right\}, f=\left\{v_1, \ldots, v_k\right\}$ in a $k$-uniform hypergraph $\mathcal{G}$, let $U_1=\left\{u_1, \ldots,u_l\right\},V_1=\left\{v_1, \ldots,v_l\right\},1\leq l \leq k-1$. we define the {\em switching operation $e \underset{V_1}{\stackrel{U_1}{\rightleftharpoons}} f$} as replacing $e$ by $e'=(e\backslash U_1)\cup V_1 =\left\{v_1, \ldots, v_l,u_{l+1},\ldots, u_k\right\}$ and replacing $f$ by $f'=(f\backslash V_1)\cup U_1 =\left\{u_1, \ldots, u_l,v_{l+1},\ldots, v_k\right\}$.
   
\end{myDef}
 Let  $\G'=\G\left(e_1\underset{\scriptscriptstyle  V_1}{\stackrel{\scriptscriptstyle  U_1}{\rightleftharpoons}} e_2\right)$ be the hypergraph obtained from $\G$ by the switching-operation $e_1 \underset{\scriptscriptstyle  V_1}{\stackrel{\scriptscriptstyle  U_1}{\rightleftharpoons}} e_2$, that is $V(\G')=V(\G)$ and
$$E\left(\G^{\prime}\right)=(E(\G) \backslash\{e_1, e_2\}) \cup\left\{e_1^{\prime}, e_2^{\prime}\right\}.$$

\begin{lem}\label{switching}
Let $\mathcal{G}=(V^\circ \cup \partial V, E^\circ \cup \partial E)$ be a $k$-uniform supertree and let $f$ be the first Dirichlet eigenfunction of $\G$. Suppose $e_1$ and $e_2$ are two edges in $\mathcal{G}$. Let $U_1\subseteq e_1$ and $V_1\subseteq e_2$ with $|U_1|=|V_1|$. Let $\G'=\G\left(e_1 \underset{\scriptscriptstyle  V_1}{\stackrel{\scriptscriptstyle  U_1}{\rightleftharpoons}} e_2\right)$. 
If $\sum_{u\in U_1} f(u)\geq \sum_{v\in V_1} f(v)$ and $\sum_{u\in e_1\setminus U_1}f(u)\leq \sum_{v\in e_2\setminus V_1} f(v)$, then $\lambda(\mathcal{\G}')\leq \lambda(\mathcal{\G}).$ 
Moreover, if one of the inequalities $\sum_{u\in U_1} f(u)\geq \sum_{v\in V_1} f(v)$ and $\sum_{u\in e_1\setminus U_1}f(u)\leq \sum_{v\in e_2\setminus V_1} f(v)$ is strict, then $\lambda(\mathcal{\G}')< \lambda(\mathcal{\G})$.
\end{lem}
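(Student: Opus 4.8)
The plan is to compare the Rayleigh quotients of $\G$ and $\G'$ using the \emph{same} function $f$ as a test function on $\G'$. First I would note that the switching operation fixes the vertex set and, as long as $\G'$ is again $k$-uniform, preserves the degree of every vertex: a vertex of $U_1$ trades its membership in $e_1$ for a membership in $e_2'$, a vertex of $e_1\setminus U_1$ trades $e_1$ for $e_1'$, and symmetrically on the $e_2$ side, the only subtlety being the at most one vertex of $e_1\cap e_2$. Hence $\partial V(\G')=\partial V(\G)$, so $f$ is admissible for the Dirichlet problem on $\G'$ and $\lambda(\G')\langle f,f\rangle\le\langle L_{\G'}f,f\rangle$, while $\langle L_\G f,f\rangle=\lambda(\G)\langle f,f\rangle$ since $f$ is the first Dirichlet eigenfunction of $\G$. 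Thus it suffices to show $\langle L_{\G'}f,f\rangle\le\langle L_\G f,f\rangle$; and for the strict statement it suffices to exclude the case $\langle L_{\G'}f,f\rangle=\langle L_\G f,f\rangle$, because then $f$ attains the Rayleigh minimum on $\G'$, so $f$ is an eigenfunction there and $L_{\G'}f=\lambda(\G')f=\lambda(\G)f=L_\G f$.

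For the quadratic form comparison I would use, for any $k$-uniform hypergraph $\mathcal H$,
\[
\langle L_{\mathcal H}f,f\rangle=\frac{1}{k-1}\sum_{e\in E(\mathcal H)}\Big(k\sum_{i\in e}f(i)^2-\Big(\sum_{i\in e}f(i)\Big)^2\Big).
\]
Since $E(\G)$ and $E(\G')$ differ only by replacing $\{e_1,e_2\}$ with $\{e_1',e_2'\}$, and $e_1\cup e_2=e_1'\cup e_2'$ with matching overlaps (here one uses that $\G'$ is $k$-uniform, which forces $e_1\cap e_2=e_1'\cap e_2'$), the $k\sum_{i\in e}f(i)^2$ terms cancel. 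Writing $a=\sum_{u\in e_1\setminus U_1}f(u)$, $b=\sum_{u\in U_1}f(u)$, $c=\sum_{v\in e_2\setminus V_1}f(v)$, $d=\sum_{v\in V_1}f(v)$, so that $\sum_{e_1}f=a+b$, $\sum_{e_2}f=c+d$, $\sum_{e_1'}f=a+d$, $\sum_{e_2'}f=c+b$, the difference collapses to
\[
\langle L_\G f,f\rangle-\langle L_{\G'}f,f\rangle=\frac{1}{k-1}\Big[(a+d)^2+(c+b)^2-(a+b)^2-(c+d)^2\Big]=\frac{2}{k-1}(b-d)(c-a).
\]
The two hypotheses are exactly $b\ge d$ and $c\ge a$, so the right-hand side is nonnegative and $\lambda(\G')\le\lambda(\G)$.

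For strictness, suppose $\lambda(\G')=\lambda(\G)$; then $L_\G f=L_{\G'}f$ by the first paragraph, so $(L_\G f-L_{\G'}f)(w)=0$ for every vertex $w$. I would evaluate this at two well-chosen vertices. For $w\in e_1\setminus U_1$ with $w\notin e_2$ (such a $w$ exists unless the switching is trivial, i.e.\ $\G'=\G$), the only edge through $w$ that changes is $e_1\mapsto e_1'$, giving
\[
0=(L_\G f-L_{\G'}f)(w)=-\frac{1}{k-1}\Big(\sum_{e_1}f-\sum_{e_1'}f\Big)=-\frac{1}{k-1}(b-d),
\]
so $b=d$. For $w\in U_1$ with $w\notin e_2$, the edge $e_1$ is replaced by $e_2'$ among the edges through $w$, and the same computation gives $0=-\tfrac{1}{k-1}\big(\sum_{e_1}f-\sum_{e_2'}f\big)=-\tfrac{1}{k-1}(a-c)$, so $a=c$. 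Hence if either inequality $b\ge d$, $c\ge a$ is strict we reach a contradiction, and therefore $\lambda(\G')<\lambda(\G)$.

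The algebraic core above is short: the identity for $\langle L_{\mathcal H}f,f\rangle$ and the one-line telescoping to $\tfrac{2}{k-1}(b-d)(c-a)$. The part I expect to need genuine care, and which is the main obstacle, is the bookkeeping around shared vertices: (i) verifying that the switching preserves every vertex degree (hence $\partial V$, hence admissibility of $f$ on $\G'$) and that $e_1\cap e_2=e_1'\cap e_2'$, which requires a short case analysis according to how the at most one vertex of $e_1\cap e_2$ sits relative to $U_1$ and $V_1$ — the awkward configurations being exactly those in which $\G'$ fails to be $k$-uniform and hence are excluded by hypothesis; and (ii) in the strictness step, ensuring the evaluation vertex has its incident-edge set altered in exactly one place, i.e.\ lies outside $e_2$, which is always possible unless the switching does nothing ($\G'=\G$).
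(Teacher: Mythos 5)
Your proposal follows essentially the same strategy as the paper's proof: use $f$ as a test function on $\G'$ to get $\lambda(\G')\le\mathcal R_{L_{\G'}}(f)$, reduce the comparison to the local change on the pair $\{e_1,e_2\}\mapsto\{e_1',e_2'\}$, obtain the identity $\langle L_\G f,f\rangle-\langle L_{\G'}f,f\rangle=\tfrac{2}{k-1}(b-d)(c-a)$, and for strictness argue that equality forces $L_\G f=L_{\G'}f$ and then evaluate this identity at a well-chosen vertex. The paper gets the key factorization by directly expanding the four sums of squares over the cross pairs $U_1\times(e_1\setminus U_1)$, $V_1\times(e_2\setminus V_1)$, $V_1\times(e_1\setminus U_1)$, $U_1\times(e_2\setminus V_1)$, whereas you route it through the per-edge identity $\sum_{\{i,j\}\subseteq e}(f(i)-f(j))^2=k\sum_{i\in e}f(i)^2-(\sum_{i\in e}f(i))^2$ and observe that the $\sum f^2$ terms cancel; this is a genuinely cleaner bookkeeping device and makes the role of the edge-sums $a,b,c,d$ transparent. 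For the strictness step you evaluate on the $e_1$ side (at $w\in e_1\setminus U_1$ and $w\in U_1$), the paper on the $e_2$ side (at $v_k\in e_2\setminus V_1$ and $v_1\in V_1$); these are symmetric variants of the same computation. You are actually somewhat \emph{more} careful than the paper: you insist the evaluation vertex lie outside $e_2$ so that its set of incident edges changes in exactly one place, a point the paper glosses over (it evaluates at $v_k$, $v_1$ without checking whether these lie in $e_1$). The part of your writeup where you flag this degree/intersection bookkeeping and note that $\G'$ being $k$-uniform forces $e_1\cap e_2$ to sit either in $U_1\cap V_1$ or in $(e_1\setminus U_1)\cap(e_2\setminus V_1)$ (and hence that degrees are preserved) is precisely the subtlety that the paper sweeps under the rug; neither you nor the paper fully addresses the fully degenerate configuration in which the switching fixes $\{e_1,e_2\}$ setwise, in which case the strict conclusion would be vacuous, but this never occurs in the applications.
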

\begin{proof}
Let $e_1=\left\{u_1, \ldots, u_k\right\}$ and $e_2=\left\{v_1, \ldots, v_k\right\}$ and let $U_1=\{u_1,\ldots ,u_l\}$ and $V_1=\{v_1,\ldots,v_l\},(1\leq l\leq k)$. 
Then $\sum_{i=1}^lf(v_i)\le\sum_{i=1}^lf(u_i)$ and $\sum_{j=l+1}^kf(v_j)\ge \sum_{j=l+1}^kf(u_j)$. Thus
\begin{equation*}
\begin{aligned}
     &\left<L_{\mathcal{G}'}f,f\right>-\left<L_{\mathcal{G}}f,f\right>\\
     &\ \ = \sum_{1\leq i\leq l\atop l+1\leq j\leq k}\frac{1}{k-1}(f(u_i)-f(v_j))^2+\sum_{1\leq i\leq l\atop l+1\leq j\leq k}\frac{1}{k-1}(f(v_i)-f(u_j))^2\\
     &\ \ \ \ \ \ -\sum_{1\leq i\leq l\atop l+1\leq j\leq k}\frac{1}{k-1}(f(u_i)-f(u_j))^2-\sum_{1\leq i\leq l\atop l+1\leq j\leq k}\frac{1}{k-1}(f(v_i)-f(v_j))^2\\
    &\ \ = \frac{1}{k-1}\sum_{1\leq i\leq l\atop l+1\leq j\leq k}2(f(v_j)-f(u_j))(f(v_i)-f(u_i))\\
     &\ \ = \frac{2}{k-1}\left(\sum_{1\leq i\leq l}f(v_i)-\sum_{1\leq i\leq l}f(u_i)\right)\left(\sum_{l+1\leq j\leq k}f(v_j)-\sum_{l+1\leq j\leq k}f(u_j)\right)\\
    &\ \  \leq  0.
\end{aligned}
\end{equation*}
Therefore, $\lambda(\mathcal{G}')\leq\mathcal{R}_{L_{\mathcal{G}'}}(f)\leq\mathcal{R}_{L_\mathcal{G}}(f)=\lambda(\mathcal{G})$.
Moreover, if $\lambda(\mathcal{G}')=\lambda(\mathcal{G})$, then $\mathcal{R}_{L_{\mathcal{G}'}}(f)=\mathcal{R}_{L_\mathcal{G}}(f)$, and thus $f$ is the first eigenfunction of $\lambda(\mathcal{G}')$.

If $\sum_{u\in U_1} f(u) > \sum_{v\in V_1} f(v)$, and we  assume $\lambda(\mathcal{G}')=\lambda(\mathcal{G})$,  then $\mathcal{R}_{L_{\mathcal{G}'}}(f)=\mathcal{R}_{L_\mathcal{G}}(f)$ and $f$ is the first eigenfunction of $\lambda(\mathcal{G}')$ too.  Thus
\begin{equation}
\begin{aligned}
    &0=\lambda(\mathcal{G})f(v_k)-\lambda(\mathcal{G}')f(v_k)=L_\mathcal{G}f(v_k)-L_{\mathcal{G}'}f(v_k)=\frac{1}{k-1}\left(\sum_{i=1}^{l}f(v_i)-\sum_{i=1}^{l}f(u_i)\right)<0,
\end{aligned}
\end{equation}
a contradiction.  Therefore, we have $\lambda(\mathcal{G}')< \lambda(\mathcal{G})$.

If $\sum_{u\in e_1\setminus U_1}f(u)< \sum_{v\in e_2\setminus V_1} f(v)$, and we assume $\lambda(\mathcal{G}')=\lambda(\mathcal{G})$, then $\mathcal{R}_{L_{\mathcal{G}'}}(f)=\mathcal{R}_{L_\mathcal{G}}(f)$ and $f$ is also the first eigenfunction of $\lambda(\mathcal{G}')$.  Thus
\begin{equation}
\begin{aligned}
    &0=\lambda(\mathcal{G})f(v_1)-\lambda(\mathcal{G}')f(v_1)=L_\mathcal{G}f(v_1)-L_{\mathcal{G}'}f(v_1)=\frac{1}{k-1}\left(\sum_{i=l+1}^{k}f(v_i)-\sum_{i=l+1}^{k}f(u_i)\right)>0,
\end{aligned}
\end{equation}
a contradiction again.  Therefore, $\lambda(\mathcal{G}')< \lambda(\mathcal{G})$.

\end{proof}

\begin{myDef}[The shifting-operation] For an edge $e$ with $u\in e$ and a vertex $v$ in a $k$-uniform hypergraph $\mathcal{G}$, we define the {\em shifting-operation $u \xrightarrow{e} v$} as replacing $e$ by $e'=(e\backslash\{u\})\cup\{v\}$.

\end{myDef}
    Let  $\G'=\G(u \xrightarrow{e} v)$ be the  graph resulting from  the shifting-operation  $u \xrightarrow{e} v$ on $\G$. Then we  have $V(\G')=V(\G)$ and $E\left(\G^{\prime}\right)=(E(\G) \backslash \{e\}) \cup\{e'\}.$ For an edge set $F$ with $u\in e$ for every edge $e\in F$ and a vertex $v$ in hypergraph $\mathcal{G}$, we use $u \xrightarrow{F} v$ denote the shifting-operations  $u \xrightarrow{e} v$ for every edge $e\in F$ on $\G$.

\begin{lem}\label{shifting}
 Let $\mathcal{G}=(V^{\circ}\cup\partial V, E^{\circ}\cup\partial E)$ be a $k$-uniform supertree. Suppose $F\subseteq E(\G)$ with $|F|=r$ and every edge in $F$ contains the vertex $u$, and let $v$ be  a vertex in $\mathcal{G}$. Let $\G'=\G(u \xrightarrow{F} v)$. If $f(u)\geq f(v)\geq f(w)$ for all $w\in V(F)\setminus\{u\}$, then $\lambda(\mathcal{G}')\leq \lambda(\mathcal{G}).$ Furthermore, if any of these inequalities are strict, then $\lambda(\mathcal{G}')< \lambda(\mathcal{G}).$

\end{lem}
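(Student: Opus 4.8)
The plan is to feed the first Dirichlet eigenfunction $f$ of $\G$ into the Rayleigh quotient of $\G'$, exactly as in the proofs of Lemmas~\ref{simple} and~\ref{switching}. Since $V(\G')=V(\G)$ and (under the hypotheses) the shifting $u\xrightarrow{F}v$ does not create a new boundary vertex on which $f$ is nonzero, the function $f$ is an admissible test function for $\G'$, so $\lambda(\G')\le\mathcal{R}_{L_{\G'}}(f)$; as $\langle f,f\rangle$ is unchanged it suffices to estimate $\langle L_{\G'}f,f\rangle-\langle L_{\G}f,f\rangle$. Writing $\langle L_{\G}f,f\rangle=\sum_{e\in E(\G)}\frac{1}{k-1}\sum_{\{x,y\}\subseteq e}(f(x)-f(y))^2$ and noting that $\G'$ differs from $\G$ only in that each $e=\{u\}\cup W_e\in F$ is replaced by $e'=\{v\}\cup W_e$, the contributions of the pairs inside $W_e$ cancel and
\begin{equation*}
\langle L_{\G'}f,f\rangle-\langle L_{\G}f,f\rangle=\frac{1}{k-1}\sum_{e\in F}\sum_{w\in W_e}\Big[(f(v)-f(w))^2-(f(u)-f(w))^2\Big].
\end{equation*}

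Next comes the sign analysis. Each bracket factors as $\big(f(v)-f(u)\big)\big((f(v)-f(w))+(f(u)-f(w))\big)$; by hypothesis $f(v)-f(u)\le 0$, while $f(v)-f(w)\ge 0$ and $f(u)-f(w)\ge f(v)-f(w)\ge 0$, so every bracket is $\le 0$. Hence $\langle L_{\G'}f,f\rangle\le\langle L_{\G}f,f\rangle$ and $\lambda(\G')\le\mathcal{R}_{L_{\G'}}(f)\le\mathcal{R}_{L_{\G}}(f)=\lambda(\G)$. For strictness there are two cases. If $f(u)>f(v)$, then for each $e\in F$ and $w\in W_e$ the second factor is strictly positive (since $f(u)-f(w)>0$), so every bracket is strictly negative, giving $\mathcal{R}_{L_{\G'}}(f)<\mathcal{R}_{L_{\G}}(f)$ and thus $\lambda(\G')<\lambda(\G)$.

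It remains to treat $f(u)=f(v)$ with $f(v)>f(w_0)$ for some $w_0\in V(F)\setminus\{u\}$ (note $f(v)>f(w_0)\ge 0$ forces $f(v)>0$, so $v$ is an interior vertex of $\G$, and $v$ is interior in $\G'$ as its degree only increases). Here all brackets vanish, so the Rayleigh comparison only yields $\le$, and I would argue by contradiction using simplicity of $\lambda(\G')$ (Lemma~\ref{simple}), mirroring the end of the proof of Lemma~\ref{switching}. If $\lambda(\G')=\lambda(\G)$, then $\mathcal{R}_{L_{\G'}}(f)=\lambda(\G')$, so $f$ is also a first Dirichlet eigenfunction of $\G'$, whence $L_{\G'}f(x)=\lambda(\G')f(x)=\lambda(\G)f(x)=L_{\G}f(x)$ on interior vertices. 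Evaluating at $v$ and using that passing from $\G$ to $\G'$ only adds, for each $e\in F$, the term $\tfrac{1}{k-1}\sum_{w\in W_e}$ to the off-diagonal sum at $v$ while raising $d(v)$ by $|F|$, one computes
\begin{equation*}
L_{\G}f(v)-L_{\G'}f(v)=\frac{1}{k-1}\sum_{e\in F}\sum_{w\in W_e}\big(f(w)-f(v)\big)<0,
\end{equation*}
since $f(v)\ge f(w)$ for all such $w$ and $f(v)>f(w_0)$ for at least one of them; this contradicts $L_{\G}f(v)=L_{\G'}f(v)$. Hence $\lambda(\G')<\lambda(\G)$ in all cases.

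The main obstacle is this last case: the Rayleigh quotient is blind to it, so one genuinely needs Lemma~\ref{simple} together with a careful accounting of how degrees and adjacencies at $u$ and $v$ change under $u\xrightarrow{F}v$ — in particular verifying that $v$ stays interior and that $f$ remains an admissible (boundary-vanishing) test function for $\G'$, which is exactly where the structural hypotheses on the shifting enter. One should also record the harmless degenerate exclusions (e.g.\ $v\notin V(F)$) so that $\G'$ is again a legitimate $k$-uniform hypergraph.
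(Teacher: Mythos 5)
Your proposal is correct and follows the paper's overall strategy: feed the first Dirichlet eigenfunction $f$ of $\G$ into $\mathcal{R}_{L_{\G'}}$, expand $\langle L_{\G'}f,f\rangle-\langle L_{\G}f,f\rangle$ over the shifted edges, and factor each summand as $(f(v)-f(u))(f(v)+f(u)-2f(w))\le 0$. Where you diverge is in the strictness argument, and your version is in fact tidier. The paper splits into three subcases, and in the subcase $f(u)>f(v)$ it evaluates the eigenequation $L_{\G'}f=\lambda(\G')f$ at the vertex $u_{1,1}\in V(F)\setminus\{u\}$; strictly speaking that identity is only guaranteed at interior vertices, and nothing in the hypotheses rules out $u_{1,1}\in\partial V$ (where $f(u_{1,1})=0$ and the eigenequation need not hold), so the paper's argument has a small unstated assumption there. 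You instead observe that when $f(u)>f(v)$ every bracket $(f(v)-f(u))\bigl((f(v)-f(w))+(f(u)-f(w))\bigr)$ is \emph{strictly} negative because $f(u)-f(w)>0$, so $\mathcal{R}_{L_{\G'}}(f)<\mathcal{R}_{L_{\G}}(f)$ directly, with no eigenequation needed. Your remaining subcase ($f(u)=f(v)$ with some $f(w_0)<f(v)$) evaluates the eigenequation at $v$, where you correctly check $v$ is interior in both $\G$ and $\G'$, matching the paper's first subcase. Your closing remark that one must ensure $f$ stays admissible for $\G'$ is well taken, though the vertex to worry about is $u$ (whose degree drops by $r$) rather than $v$ (whose degree rises); both your proof and the paper's implicitly assume $d_\G(u)>r$ so that $u$ remains interior in $\G'$, which holds in all applications of the lemma.
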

\begin{proof}
Let $F=\{e_1, e_2, \dots, e_r\}$ and $e_i=\left\{u,u_{i,1} \ldots, u_{i,k-1}\right\}$ for $1\leq i\leq r$. Then 
	
    \begin{equation*}
\begin{aligned}
     &\left<L_{\mathcal{G}'}f,f\right>-\left<L_{\mathcal{G}}f,f\right>\\
    &\,\, = \sum_{i=1}^r\left(\sum_{j=1}^{k-1}\frac{1}{k-1}(f(v)-f(u_{i,j}))^2-\sum_{j=1}^{k-1}\frac{1}{k-1}(f(u)-f(u_{i,j}))^2\right)\\
    &\,\, = \frac{1}{k-1}\sum_{i=1}^r\left(\sum_{j=1}^{k-1}(f(v)-f(u))\left(f(v)+f(u)-2f(u_{i,j})\right)\right)\\
   &\,\,  \leq  0.
\end{aligned}
\end{equation*}
Therefore, $\lambda(\mathcal{G}')\leq\mathcal{R}_{\mathcal{G}'}(f)\leq\mathcal{R}_{\mathcal{G}}(f)=\lambda(\mathcal{G})$.
Moreover, if  $\lambda(\mathcal{G}')=\lambda(\mathcal{G})$ then $\mathcal{R}_{L_{\mathcal{G}'}}(f)=\mathcal{R}_{L_\mathcal{G}}(f)$ and thus $f$ is an eigenfunction of $\lambda(\mathcal{G}')$.

If $f(v)> f(u_{i,j})$ for some $1\leq i \leq r$, $1\leq j \leq k-1$ and $\lambda(\mathcal{G}')=\lambda(\mathcal{G})$, then we have
\begin{equation*}
\begin{aligned}
    0=\lambda(\mathcal{G}')f(v)-\lambda(\mathcal{G})f(v)=L_{\mathcal{G}'}f(v)-L_\mathcal{G}f(v)=\frac{1}{k-1}\sum_{i=1}^{r}\sum_{j=1}^{k-1}(f(v)-f(u_{i,j}))>0,
\end{aligned}
\end{equation*}
a contradiction. 

 If $f(u)>f(u_{i,j})$ for some $1\leq i \leq r$, $1\leq j \leq k-1$ and  $\lambda(\mathcal{G}')=\lambda(\mathcal{G})$, then we have
\begin{equation*}
\begin{aligned}
    0=\lambda(\mathcal{G})f(u)-\lambda(\mathcal{G}')f(u)=L_{\mathcal{G}}f(u)-L_{\mathcal{G}'}f(u)=\frac{1}{k-1}\sum_{i=1}^{r}\sum_{j=1}^{k-1}(f(u)-f(u_{i,j}))>0,
\end{aligned}
\end{equation*}
a contradiction.

Now suppose $f(u)> f(v)$. If $\lambda(\mathcal{G}')=\lambda(\mathcal{G})$, then 
\begin{equation*}
\begin{aligned}
    0=\lambda(\mathcal{G}')f(u_{1,1})-\lambda(\mathcal{G})f(u_{1,1})=L_{\mathcal{G}'}f(u_{1,1})-L_\mathcal{G}f(u_{1,1})=\frac{1}{k-1}(f(u)-f(v))>0,
\end{aligned}
\end{equation*}
a contradiction again.

\end{proof}

\section{Proof of Theorem~\ref{mainthm}}
In the subsequent discussion, let $\G(V^{\circ}\cup\partial V, E^{\circ}\cup\partial E)$ be a $k$-uniform supertree with boundary in the class $\T_{\pi}$. Additionally, assume  $\G$ has the Faber–Krahn property. 
Let $f$ be an eigenfunction corresponding to the first Dirichlet eigenvalue of $\G$.

We now introduce the following method of relabeling the vertices of $V(\G)$, which is first proposed by Bıyıkoğlu and Leydold~\cite{Biyikoğlu Leydold FK trees relabeling} and Zhang~\cite{X.-D. Zhang relabeling} on graphs, later extended to hypergraphs by Xiao, Wang and Lu~\cite{Xiao spec supertree deg seq}.

%Let $\mathcal{G}=(V^{\circ}\cup\partial V, E^{\circ}\cup\partial E)$ be a $k$-uniform supertree and $f$ an eigenfunction corresponding to the first Dirichlet eigenvalue. 
Take a vertex $v_0$ such that  $f(v_0)=\max_{v \in V(\G)} f(v)$ as the root vertex. 
Recall that the height of $v$ is $h(v)=\dist(v, v_0)$. Let $h=\max_{v \in V(\G)}h(v)$. Let $V_i=\left\{v \mid \dist\left(v, v_0\right)=i\right\}$, $a_i=\left|V_i\right|$, and $b_i=\sum_{v \in V_i} d(v)$ for $0 \leq i \leq h$. 
We now relabel all vertices of $\G$ according to the following rules.

\begin{itemize}
\item[] {\bf Relabeling rules:}
    \item[(1)] Label vertex $v_0$ as $v_{0,1,1}$, designating it as the root of the supertree $\G$.
    \item[(2)] All vertices of $\G$ adjacent to $v_{0,1,1}$ are relabeled as $v_{1,1,1}, \ldots, v_{1,1, k-1}$, $v_{1,2,1}, \ldots, v_{1,2, k-1}$, $\ldots, v_{1, b_0, 1}, \ldots, v_{1, b_0, k-1}$, and these vertices satisfy the following conditions:
    \begin{itemize}
        \item[($a$)] The set $e_{1, i}=\left\{v_{0,1,1}, v_{1, i, 1}, \ldots, v_{1, i, k-1}\right\}$ is an edge of $\G$ for $1 \leq i \leq b_0$.
        \item[($b$)] $\sum_{p=1}^{k-1} f(v_{1, i, p}) \geq \sum_{q=1}^{k-1} f(v_{1, j, q})$ for $1 \leq i<j \leq b_0$.
        \item[($c$)] $f(v_{1, i, p}) \geq f(v_{1, i, q})$ for all $1 \leq i \leq b_0$ and $1 \leq p<q \leq k-1$.
    \end{itemize}
    \item[(3)] Assuming that all vertices of $V_r$ have been relabeled for $1 \leq r \leq h-1$, we now relabel the vertices of $V_{r+1}$ as $v_{r+1,1,1}, v_{r+1,1,2}, \ldots, v_{r+1,1, k-1}$, $v_{r+1,2,1}, \ldots, v_{r+1,2, k-1}$, $\ldots, v_{r+1, b_r-a_r, 1}, \ldots, v_{r+1, b_r-a_r, k-1}$. These vertices satisfy the following conditions:
    \begin{itemize}
        \item[($a^{\prime}$)] There exists an edge $e \in E(\G)$ such that $\left\{v_{r+1, i, 1}, v_{r+1, i, 2}, \ldots, v_{r+1, i, k-1}\right\} \subset e$ for $1 \leq i \leq b_r-a_r$.
        \item[($b^{\prime}$)] $\sum_{p=1}^{k-1} f(v_{r+1, i, p}) \geq \sum_{q=1}^{k-1} f(v_{r+1, j, q})$ for all $1 \leq i<j \leq b_r-a_r$.
        \item[($c^{\prime}$)] If $\sum_{p=1}^{k-1} f(v_{r+1, i, p})=\sum_{q=1}^{k-1} f(v_{r+1, j, q})$ for $1 \leq i<j \leq b_r-a_r$ and the sets $e_{r+1,i}=\left\{v_{r, i_1, p_1}, v_{r+1, i, 1}, \ldots, v_{r+1, i, k-1}\right\}$ and $e_{r+1,j}=\left\{v_{r, j_1, q_1}, v_{r+1, j, 1}, \ldots, v_{r+1, j, k-1}\right\} \in E(\G)$, then $i_1<j_1$ or $i_1=j_1, p_1<q_1$.
        
        %If $\sum_{p=1}^{k-1} f(v_{r+1, i, p})=\sum_{q=1}^{k-1} f(v_{r+1, j, q})$ for $1 \leq i<j \leq b_r-a_r$, then $i_1<j_1$ or $i_1=j_1, p_1<q_1$, where $\left\{v_{r, i_1, p_1}, v_{r+1, i, 1}, \ldots, v_{r+1, i, k-1}\right\} \in E(\G)$ and $\left\{v_{r, j_1, q_1}, v_{r+1, j, 1}, \ldots, v_{r+1, j, k-1}\right\} \in E(\G)$.
        \item[($d^{\prime}$)]$f(v_{r+1, i, p}) \geq f(v_{r+1, i, q})$ for all $1 \leq i \leq b_r-a_r$ and $1 \leq p<q \leq k-1$.
    \end{itemize}
\end{itemize}

Define an ordering $\prec$ on $V(\G)$ as follows:
$$
v_{s, i, p} \prec v_{t, j, q} \text { if and only if } (s,i,p)<(t,j,q) \text { in the lexicographic order}.
$$
Then the following holds.

%We will show that if $T$ attains the largest $\alpha$-spectral radius in $\mathcal{T}_\pi$, and $T$ is relabeled, then $T$ satisfies the following properties:
%$$\begin{aligned}
%& f(v_{s, i, p}) \geq f(v_{t, j, q}) \text { for } s<t \text { or } s=t, i<j \text { or } s=t, i=j, p<q \\
%& d_{v_{s, i, p}} \leq d_{v_{t, j, q}} \text { for } s<t \text { or } s=t, i<j \text { or } s=t, i=j, p<q .
%\end{aligned}$$

%In the subsequent discussion, let $\G(V^{\circ}\cup\partial V, E^{\circ}\cup\partial E)$ be a $k$-uniform supertree with boundary in a certain class $\T_{\pi}$. Additionally, assume  $\G$ has the Faber–Krahn property. 
%{\color{red}Let $f$ be an eigenfunction corresponding to the first Dirichlet eigenvalue of $\G$.}

\begin{lem}\label{f(v)<f(u)}
If $v_{s, i, p} \prec v_{t, j, q}$ then 	$f(v_{s, i, p}) \geq f(v_{t, j, q})$.

%Let $\G(V,E)$ be a $k$-uniform supertree with boundary in some class $\T_{\pi}$ and $f$ an eigenfunction corresponding to the first Dirichlet eigenvalue. If $\G$ has the Faber–Krahn property, then 
%$\G$ satisfies $f(v_{s, i, p}) \geq f(v_{t, j, q}) \text { for } s<t \text { or } s=t, i<j \text { or } s=t, i=j, p<q$ after relabeling $\G$ by the above method. 
\end{lem}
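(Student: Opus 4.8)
The plan is to prove the equivalent statement that $f$ is non-increasing along the linear order $\prec$; since $\prec$ is linear it suffices to show $f(u)\ge f(u')$ whenever $u'$ is the immediate $\prec$-successor of $u$. Assume this fails, and among the consecutive pairs $u\prec u'$ with $f(u)<f(u')$ pick one with $u$ being $\prec$-minimal; set $s=h(u)$. Minimality forces $f$ to be non-increasing on $\{w:w\preceq u\}$, and since the relabeling lists the vertices by non-decreasing height this already gives that $f$ is non-increasing within each of $V_0,\dots,V_s$ and that $\min_{V_{j-1}}f\ge\max_{V_j}f$ for $1\le j\le s$. Reading off the relabeling rules, $(u,u')$ is of one of three types: (a) $u$ and $u'$ lie in a common block $\{v_{s,i,1},\dots,v_{s,i,k-1}\}$; (b) $u=v_{s,i,k-1}$ is the last vertex of some block at level $s$ and $u'=v_{s,i+1,1}$ is the first vertex of the next block at level $s$; (c) $u$ is the last vertex of level $s$ and $u'=v_{s+1,1,1}$ is the first vertex of level $s+1$. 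In each case I will exhibit a switching operation producing a supertree $\G'\in\T_\pi$ with $\lambda(\G')<\lambda(\G)$, contradicting the Faber--Krahn property of $\G$. I record once the facts used throughout: a switching operation leaves every vertex degree unchanged and keeps the hypergraph connected, since the subtree hanging below each exchanged vertex is carried along, so (with the edge count unchanged) $\G'$ is again a $k$-uniform supertree with degree sequence $\pi$.

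Type (a) cannot occur, since the relabeling already makes $f$ non-increasing inside a block. For type (b), if the two blocks share a parent $u_0\in V_{s-1}$, I apply Lemma~\ref{switching} to the switch exchanging $u'$ and $u$ between the two edges above these blocks: its first hypothesis is $f(u')>f(u)$, and the second unwinds, using the rule that orders blocks by $f$-sum together with $f(u')>f(u)$, to $\sum_{p=2}^{k-1}f(v_{s,i+1,p})<\sum_{p=1}^{k-2}f(v_{s,i,p})$. If instead the two blocks have distinct parents $u_0,u_0'\in V_{s-1}$, I first check $f(u_0)\ge f(u_0')$: when the two block sums are equal this follows from the tie-breaking rule (equal sums are ordered by parent) together with the minimality of $u$ (as $u_0,u_0'\prec u$), and when block $i$ has the strictly larger sum it follows because otherwise the switch exchanging the two whole blocks between their parent-edges would already lower $\lambda$; with $f(u_0)\ge f(u_0')$ in hand, the single-vertex switch exchanging $u'$ and $u$ again meets the hypotheses of Lemma~\ref{switching}.

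Type (c) is the heart of the matter. Write $c=u$ (so $f(c)=\min_{V_s}f$) and $d=u'=v_{s+1,1,1}$, let $z\in V_s$ be the parent of the block of $d$, and let $e^{*}=\{z\}\cup\{v_{s+1,1,1},\dots,v_{s+1,1,k-1}\}$ and $e_c=\{p_c\}\cup\{v_{s,\mathrm{last},1},\dots,v_{s,\mathrm{last},k-1}\}$, with $p_c\in V_{s-1}$, be the edges above $d$ and $c$ (here $v_{s,\mathrm{last},\cdot}$ denotes the last block of level $s$). If $d$ is a boundary vertex then $f(d)=0\le f(c)$, so $d$ is interior. In the generic case $z\notin e_c$, so $e^{*}$ and $e_c$ are disjoint, and I argue in two stages. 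First, the switch exchanging the block of $d$ with the last block of level $s$ between $e^{*}$ and $e_c$ has second hypothesis $f(z)\le f(p_c)$, which holds since $z\in V_s$, $p_c\in V_{s-1}$ and $\min_{V_{s-1}}f\ge\max_{V_s}f$; hence, unless $\lambda$ strictly drops, the block of $d$ has $f$-sum at most that of the last block of level $s$. Second, the single-vertex switch exchanging $d$ with $c$ between $e^{*}$ and $e_c$ has first hypothesis $f(d)>f(c)$, while its second, after absorbing the equal parent contributions via $f(p_c)\ge f(z)$, becomes exactly $\sum_p f(v_{s+1,1,p})-f(d)\le\sum_p f(v_{s,\mathrm{last},p})-f(c)$, which follows from the block-sum inequality of the first stage strengthened by $f(d)>f(c)$; thus $\lambda(\G')<\lambda(\G)$. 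The remaining case, $z\in e_c$ (so $c$ and $z$ are siblings, possibly $z=c$), is degenerate, since the two edges overlap and several switchings collapse; it is handled by first moving the block of $d$ to hang below the $f$-maximal level-$s$ vertex $v_{s,1,1}$ by a block-exchange switch (which does not raise $\lambda$), after which, in the only equality case that survives, $f$ is constant on $V_s$ and the eigenvalue equation at $z$ rules out $f(d)>f(c)$.

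The step I expect to be the main obstacle is type (c): a single switching between $e^{*}$ and $e_c$ cannot be made to satisfy Lemma~\ref{switching}, because at that point there is no comparison available between the interior vertices of the block of $d$ (at level $s+1$) and those of the last block of level $s$, and it is precisely the preliminary block-exchange switch that supplies the missing inequality. The degenerate configuration $z\in e_c$ is the other delicate point, where some switchings become ill-defined and one must fall back on a normalization argument together with the eigenvalue equation.
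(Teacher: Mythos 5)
Your proof follows essentially the same strategy as the paper's: relabel the vertices using $f$, argue by (what amounts to) induction that $f$ is non-increasing along $\prec$, and in each failure scenario produce a switching that strictly lowers $\lambda$, contradicting the Faber–Krahn hypothesis. The minimal-counterexample formulation is a clean repackaging of the paper's nested induction, and your types (a)/(b) correspond to the paper's Case~1 while your generic type (c) (the case $z\notin e_c$) corresponds to the paper's Case~2 subcase where the parent of the first level-$(s+1)$ block lies outside $e_{1,b_0}$; in both places your switchings reproduce the paper's.

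The genuine gap is the degenerate sub-case $z\in e_c$. Your plan is to ``move the block of $d$ to hang below $v_{s,1,1}$,'' then claim that in the surviving equality case $f$ is constant on $V_s$ and conclude via the eigenvalue equation. This does not go through. First, the equality case of the preliminary block-exchange only yields $f(z)=f(v_{s,1,1})$ and equality of the two level-$(s+1)$ block sums; since $z$ need not be the last vertex of $V_s$ (it is some $v_{s,\mathrm{last},i_2}$, $i_2$ possibly $<k-1$), one obtains constancy of $f$ only on the prefix of $V_s$ up to $z$, not on all of $V_s$ — in particular $f(c)<f(z)$ is still entirely possible (e.g.\ $c$ may be a boundary vertex). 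Second, the preliminary move may be vacuous or may not remove the degeneracy at all: if $V_s$ consists of a single block, then $v_{s,1,1}\in e_c$, and possibly $z=v_{s,1,1}$, so the ``generic'' machinery still does not apply after the move. Third, the target block hanging below $v_{s,1,1}$ (needed for the block-exchange) may not exist outside this one situation, and when it does not, the paper's proof shows that one must instead locate a boundary vertex in $V_s$ or $V_{s+1}$ and derive a contradiction from $0=f(\cdot)\ge f(\cdot)>0$; your sketch has no analogue of this. The paper resolves the degenerate case differently: it locates another level-$(s+1)$ edge $e_{s+1,j}$ whose parent $v_{s,l_1,l_2}$ lies outside the last block, verifies via rule~$(c')$ that the block sums are strictly ordered, and swaps the two parents between $e_{s+1,j}$ and $e^{*}$ to produce a strictly $\lambda$-decreasing switching — and then separately treats the case where no such edge exists. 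You would need to supply an argument of this kind to close the gap you flagged.
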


\begin{proof}
    By the relabeling rules ($c$) and ($d^\prime$), we have $f(v_{s, i, p}) \geq f(v_{t, j, q})$ for $s=t, i=j, p<q$. We only need to prove that $f(v_{s, i, p}) \geq f(v_{t, j, q})$ holds when $s<t$ or $s=t$ but $i<j$.
    
\noindent\textbf{Case 1:} $s=t$ but $i<j$.
    
We prove by induction on $s$. When $s=t=1$. Suppose that $f(v_{1,i,p_0})<f(v_{1,j,q_0})$ for some $1\le p_0, q_0\le k-1$. By the Relabeling rule ($b$), we have $\sum_{p=1}^{k-1} f(v_{1, i, p}) \geq \sum_{q=1}^{k-1} f(v_{1, j, q})$. Thus $\sum_{p=1}^{k-1} f(v_{1, i, p})-f(v_{1,i,p_0}) > \sum_{q=1}^{k-1} f(v_{1, j, q})-f(v_{1,j,q_0})$. Let $\mathcal{G'}$ be the new hypergraph obtained by the edge-switching $e_{1,i} \underset{\scriptscriptstyle v_{1,j,q_0}}{\stackrel{\scriptscriptstyle v_{1,i,p_0}}{\rightleftharpoons}} e_{1,j}$. Then $\mathcal{G'}\in \mathcal{T}_\pi$. However, $\lambda(\mathcal{G}')< \lambda(\mathcal{G})$ by Lemma~\ref{switching}, this leads to a contradiction. Therefore, $f(v_{1,i,p})\geq f(v_{1,j,q})$ for $i<j$.

Now assume $f(v_{r, i, p})\ge f(v_{r, j, q})$ when $s=t=r>1$. Suppose that $f(v_{r+1, i, p_0}) < f(v_{r+1, j, q_0})$ for some $1\le p_0, q_0\le k-1$. 
%We will prove $f(v_{r+1, i, p}) \geq f(v_{r+1, j, q})$ for $i<j$. Assume that $f(v_{r+1, i, p_0}) < f(v_{r+1, j, q_0})$. 
By the Relabeling rule ($a^\prime$), there exist $e_{r+1,i}=\left\{v_{r,i_1,i_2}, v_{r+1, i, 1}, \ldots, v_{r+1, i, k-1}\right\}$ and $e_{r+1,j}=\left\{v_{r,j_1,j_2}, v_{r+1, j, 1}, \ldots, v_{r+1, j, k-1}\right\}$ in $\G$ with $v_{r+1, i, p_0}\in e_{r+1,i}$ and $v_{r+1, j, q_0}\in e_{r+1,j}$. By the Relabeling rule ($b'$),  we have $\sum_{p=1}^{k-1} f(v_{r+1, i, p}) \geq \sum_{q=1}^{k-1} f(v_{r+1, j, q})$.
%We divide this case into three subcases.
 
If $v_{r,i_1,i_2}=v_{r,j_1,j_2}$, then  $\sum_{p=1}^{k-1} f(v_{r+1, i, p})-f(v_{r+1,i,p_0}) > \sum_{q=1}^{k-1} f(v_{r+1, j, q})-f(v_{r+1,j,q_0})$. Let $\mathcal{G'}$ be the new hypergraph obtained by the edge-switching operation $e_{r+1,i} \underset{v_{r+1,j,q_0}}{\stackrel{v_{r+1,i,p_0}}{\rightleftharpoons}} e_{r+1,j}$. Then $\mathcal{G'}\in \mathcal{T}_\pi$ and according to  Lemma~\ref{switching}, we have $\lambda(\mathcal{G}')< \lambda(\mathcal{G})$. This leads to a contradiction. Therefore, $f(v_{r+1,i,p})\geq f(v_{r+1,j,q})$ when $i<j$.

If $v_{r,i_1,i_2}\prec v_{r,j_1,j_2}$, then $i_1<j_1$ or $i_1=j_1, i_2<j_2$.  By the induction hypothesis, we have  $f(v_{r,i_1,i_2})\geq f(v_{r,j_1,j_2})$.
%\begin{itemize}
%	\item If $i_1<j_1$, then $f(v_{r,i_1,i_2})\geq f(v_{r,j_1,j_2})$ by the assumption of the induction.
%	\item If $i_1=j_1, i_2<j_2$, then $f(v_{r,i_1,i_2})\geq f(v_{r,j_1,j_2})$ by ($d'$).
%\end{itemize}
%By the Relabeling rule ($b'$), we have $f(v_{r,i_1,i_2})+\sum_{p=1}^{k-1} f(v_{r+1, i, p}) \geq f(v_{r,j_1,j_2})+\sum_{q=1}^{k-1} f(v_{r+1, j, q})$. 
Thus $f(v_{r,i_1,i_2})+\sum_{p=1}^{k-1} f(v_{r+1, i, p})-f(v_{r+1,i,p_0}) > f(v_{r,j_1,j_2})+\sum_{q=1}^{k-1} f(v_{r+1, j, q})-f(v_{r+1,j,q_0})$. Let $\mathcal{G'}$ be the new hypergraph obtained by the edge-switching operation $e_{r+1,i} \underset{v_{r+1,j,q_0}}{\stackrel{v_{r+1,i,p_0}}{\rightleftharpoons}} e_{r+1,j}$. Then $\mathcal{G'}\in \mathcal{T}_\pi$ and according to  Lemma~\ref{switching}, we have $\lambda(\mathcal{G}')< \lambda(\mathcal{G})$. This leads to a contradiction again. Therefore, we have  $f(v_{r+1,i,p})\geq f(v_{r+1,j,q})$ for $i<j$.

If $v_{r,j_1,j_2}\prec v_{r,i_1,i_2}$, then $j_1<i_1$ or $j_1=i_1$, $j_2<i_2$.
By the induction hypothesis, we have $f(v_{r,i_1,i_2})\leq f(v_{r,j_1,j_2})$. Thus $\sum_{p=1}^{k-1} f(v_{r+1, i, p})-f(v_{r+1,i,p_0}) > \sum_{q=1}^{k-1} f(v_{r+1, j, q})-f(v_{r+1,j,q_0})$, and $f(v_{r,i_1,i_2})+f(v_{r+1,i,p_0}) < f(v_{r,j_1,j_2})+f(v_{r+1,j,q_0})$. 
Set $U_1=\{v_{r+1,i,p_0}, v_{r,i_1,i_2}\}$ and $V_1=\{v_{r+1,j,q_0}, v_{r,j_1,j_2}\}$.
Let $\mathcal{G'}$ be the new hypergraph obtained by the edge-switching operation $e_{r+1,i} \underset{\scriptscriptstyle V_1}{\stackrel{\scriptscriptstyle U_1}{\rightleftharpoons}} e_{r+1,j}$. Then $\mathcal{G'}\in \mathcal{T}_\pi$ and according to Lemma~\ref{switching}, $\lambda(\mathcal{G}')< \lambda(\mathcal{G})$, This leads to a contradiction too. Therefore, $f(v_{r+1,i,p})\geq f(v_{r+1,j,q})$ for $i<j$.      

\noindent\textbf{Case 2:} $s<t$%case1已经说明每一层里面的点都是从大到小的排序，case2就只需要说明上一层里最小的点（最后一个点）大于下一层里最大的点（第一个）

From Case 1, the Relabeling rules ($c$) and ($d'$), it is sufficient to prove that $f(v_{r, b_{r-1}-a_{r-1}, k-1}) \geq f(v_{r+1,1,1}) \text { for } r \geq 1$, where set $a_0=0$. We prove by induction on $r$.

    For the base case, we show that $f(v_{1, b_0, k-1}) \geq f(v_{2,1,1})$. Suppose that $f(v_{1, b_0, k-1}) < f(v_{2,1,1})$. By the Relabeling rule ($a$), $v_{1, b_0, k-1}\in e_{1,b_0}=\left\{v_{0,1,1}, v_{1, b_0, 1}, \ldots, v_{1, b_0, k-1}\right\}$, and by the Relabeling rule  ($a'$), there exists an edge $e_{2,1}=\left\{v_{1,i_1,i_2}, v_{2, 1, 1}, \ldots, v_{2, 1, k-1}\right\}\in E(\G)$.
    
    If $v_{1,i_1,i_2}\in e_{1,b_0}$, then $i_1=b_0$. 
     Since $v_{0,1,1}\in V^\circ$,  we have $b_0>1$ and thus $v_{1,1,1}\notin e_{1,b_0}$. Since $f(v_{1,1,1})\geq f(v_{1, b_0, i_2})>0$, we also have $v_{1,1,1}\in V^\circ$. Thus there exists an edge $e_{2,j}=\{v_{1,1,1},v_{2,j,1},\ldots, v_{2,j,k-1}\}$ with $j\neq 1$ containing $v_{1,1,1}$. 
According to Case 1, $f(v_{1,1,1})\geq f(v_{1,b_0,i_2})$. By the Relabelling rules ($b'$), ($c'$) and ($d'$), $\sum_{p=1}^{k-1} f(v_{2, 1, p}) > \sum_{q=1}^{k-1} f(v_{2, j, q})$ (otherwise, $\sum_{p=1}^{k-1} f(v_{2, 1, p}) = \sum_{q=1}^{k-1} f(v_{2, j, q})$, then by ($c'$), we have $b_0\leq 1$, a contradiction).
Let $\mathcal{G'}$ be the new hypergraph obtained by the edge-switching  operation $e_{2,j} \underset{v_{1,b_0,i_2}}{\stackrel{v_{1,1,1}}{\rightleftharpoons}} e_{2,1}$. Then $\mathcal{G'}\in \mathcal{T}_\pi$ and according to  Lemma~\ref{switching}, we have $\lambda(\mathcal{G}')< \lambda(\mathcal{G})$, a contradiction.

Now assume $v_{1,i_1,i_2}\notin e_{1,b_0}$. If $\sum_{p=1}^{k-1} f(v_{2, 1, p}) > \sum_{q=1}^{k-1} f(v_{1, b_0, q})$, by the definition of $v_{0,1,1}$, $f(v_{1, i_1, i_2}) \leq f(v_{0,1,1})$. Let $\mathcal{G'}$ be the new hypergraph obtained by the edge-switching operation  $e_{2,1} \underset{v_{0,1,1}}{\stackrel{v_{1,i_1,i_2}}{\rightleftharpoons}} e_{1,b_0}$. Then $\mathcal{G'}\in \mathcal{T}_\pi$ and according to Lemma~\ref{switching}, we have $\lambda(\mathcal{G}')< \lambda(\mathcal{G})$. This leads to a contradiction.  Thus $\sum_{p=1}^{k-1} f(v_{2, 1, p}) \leq \sum_{q=1}^{k-1} f(v_{1, b_0, q})$. Recall the assumption that $f(v_{1, b_0, k-1}) < f(v_{2,1,1})$. 
Then $\sum_{p=2}^{k-1} f(v_{2, 1, p}) < \sum_{q=1}^{k-2} f(v_{1, b_0, q})$. Therefore, we have $f(v_{1, i_1, i_2})+\sum_{p=2}^{k-1} f(v_{2, 1, p}) < \sum_{q=1}^{k-2} f(v_{1, b_0, q})+f(v_{0,1,1})$. Let $\mathcal{G'}$ be the new hypergraph obtained by the edge-switching operation  $e_{2,1} \underset{v_{1,b_0,k-1}}{\stackrel{v_{2,1,1}}{\rightleftharpoons}} e_{1,b_0}$. Then $\mathcal{G'}\in \mathcal{T}_\pi$ and according to Lemma~\ref{switching},  $\lambda(\mathcal{G}')< \lambda(\mathcal{G})$. This leads to a contradiction again.  Therefore, $f(v_{1,b_0,k-1})\geq f(v_{2,1,1})$.

Now suppose  $f(v_{r, b_{r-1}-a_{r-1}, k-1}) \geq f(v_{r+1,1,1})$ for $r\ge 2$. 
    We show that $f(v_{r+1, b_{r}-a_{r}, k-1}) \geq f(v_{r+2,1,1})$. Suppose not, i.e., $f(v_{r+1, b_{r}-a_{r}, k-1}) < f(v_{r+2,1,1})$. By the Relabeling rule ($a'$), there exist an edge  $e_{r+1,b_r-a_r}=\left\{v_{r,i_1,i_2}, v_{r+1, b_r-a_r, 1}, \ldots, v_{r+1, b_r-a_r, k-1}\right\}$ with $v_{r+1, b_r-a_r, k-1}\in e_{r+1,b_r-a_r}$, and an edge $e_{r+2,1}=\left\{v_{r+1,j_1,j_2}, v_{r+2, 1, 1}, \ldots, v_{r+2, 1, k-1}\right\}$ with $v_{r+2, 1, 1}\in e_{r+2,1}$.
    
%{\color{red} If $v_{r+1,j_1,j_2}\in e_{r+1,b_r-a_r}$ then $j_1=b_r-a_r.$
%
%There exists $v_{r+1,l_1,l_2}$ and $e_{r+2,j}(j\neq 1)$ such that $v_{r+1,l_1,l_2}\in e_{k+2,j}=\{v_{r+1,l_1,l_2},v_{k+2,j,1},\ldots, v_{k+2,j,k-1}\}$. (Otherwise, there exists a pendent vertex $v_{r,x,y}\in V_r$ which $f(v_{r, b_{r-1}-a_{r-1}, k-1})\leq f(v_{r,x,y})=0<f(v_{r+1, b_{r}-a_{r}, k-1})\leq f(v_{r+1,1,1})$, contradict to the assumption of the induction!)
%By Csae 1, $f(v_{r+1,l_1,l_2})\geq f(v_{r+1, b_r-a_r, k-1})$. By ($b'$) and ($d'$), $\sum_{p=1}^{k-1} f(v_{r+2, 1, p}) > \sum_{q=1}^{k-1} f(v_{r+2, j, q})$ (otherwise, $b_r-a_r\leq 1$, contradiction!).
%Let $\mathcal{G'}$ be the new hypergraph obtained by switching $e_{r+2,j} \underset{v_{r+1,l_1,l_2}}{\stackrel{v_{r+1, b_r-a_r, k-1}}{\rightleftharpoons}} e_{r+1,b_r-a_r}$, then $\mathcal{G'}\in \mathcal{T}_\pi$ and $\lambda(\mathcal{G}')< \lambda(\mathcal{G})$ by Lemma~\ref{switching}. Contradiction!
%}

 If $v_{r+1,j_1,j_2}\in e_{r+1,b_r-a_r}$ then $j_1=b_r-a_r.$
If there exists a vertex $v_{r+1,l_1,l_2}$ $(l_1< b_r-a_r)$ and an edge $e_{r+2,j}\,(j\neq 1)$ such that $v_{r+1,l_1,l_2}\in e_{r+2,j}=\{v_{r+1,l_1,l_2},v_{r+2,j,1},\ldots, v_{r+2,j,k-1}\}$. 
%\leq f(v_{r,x,y})=0<f(v_{r+1, b_{r}-a_{r}, k-1})\leq f(v_{r+1,1,1})
From Case 1, $f(v_{r+1,l_1,l_2})\geq f(v_{r+1, b_r-a_r, j_2})$. By the Relabeling rule ($b'$), $\sum_{p=1}^{k-1} f(v_{r+2, 1, p}) \ge \sum_{q=1}^{k-1} f(v_{r+2, j, q})$. We claim that $\sum_{p=1}^{k-1} f(v_{r+2, 1, p}) > \sum_{q=1}^{k-1} f(v_{r+2, j, q})$. Otherwise, we 
 have $\sum_{p=1}^{k-1} f(v_{r+2, 1, p}) = \sum_{q=1}^{k-1} f(v_{r+2, j, q})$. Then by the Relabeling rule ($c'$), we have $b_r-a_r<l_1$, or  $b_r-a_r=l_1$ and $j_2<l_2$, each case contradicts to the assumption $l_1< b_r-a_r$.
Let $\mathcal{G'}$ be the new hypergraph obtained by the switching-operation $e_{r+2,j} \underset{v_{r+1, b_r-a_r, j_2}}{\stackrel{v_{r+1,l_1,l_2}}{\rightleftharpoons}} e_{r+2,1}$. Then $\mathcal{G'}\in \mathcal{T}_\pi$ and $\lambda(\mathcal{G}')< \lambda(\mathcal{G})$ by Lemma~\ref{switching}. This leads to a contradiction.
Now assume that such a vertex $v_{r+1,l_1,l_2}$ does not exist. Then either all the vertices in $V_{r+2}$ are adjacent to a vertex within the edge $e_{r+1,b_r-a_r}$, or there are no vertex in $V_{r+1}$ other than the vertices in $e_{r+1,b_r-a_r}$. 
For the former case, there exists  a boundary vertex $v_{r+1,x,y}\in V_{r+1}$ with $x\neq b_r-a_r$. Then we have a contradiction that  $0=f(v_{r+1, x, y})\geq f(v_{r+1, b_{r}-a_{r},j_2})>0$, where the first inequality holds according to Case 1, and $f(v_{r+1, b_{r}-a_{r},j_2})>0$ since $v_{r+1, b_{r}-a_{r},j_2}$ is an interior vertex as  $d(v_{r+1, b_{r}-a_{r},j_2})\ge 2$. For the latter case, we have  a boundary vertex $v_{r,x,y}\in V_{r}$ and thus we also have a contradiction that $0=f(v_{r,x,y})\geq f(v_{r, b_{r-1}-a_{r-1}, k-1}) \geq f(v_{r+1,1,1})\geq f(v_{r+1, b_{r}-a_{r},j_2})>0$.

Now assume $v_{r+1,j_1,j_2}\notin e_{r+1,b_r-a_r}.$ If $\sum_{p=1}^{k-1} f(v_{r+2, 1, p}) > \sum_{q=1}^{k-1} f(v_{r+1, b_r-a_r, q})$, by the induction hypothesis, $f(v_{r+1, j_1, j_2}) \leq f(v_{r,i_1,i_2})$. 
Let $\mathcal{G'}$ be the new hypergraph obtained by the edge-switching operation $e_{r+2,1} \underset{v_{r,i_1,i_2}}{\stackrel{v_{r+1,j_1,j_2}}{\rightleftharpoons}} e_{r+1,b_r-a_r}$. Then $\mathcal{G'}\in \mathcal{T}_\pi$ and according to Lemma~\ref{switching}, $\lambda(\mathcal{G}')< \lambda(\mathcal{G})$. This leads to a contradiction. 
Thus $\sum_{p=1}^{k-1} f(v_{r+2, 1, p}) \leq \sum_{q=1}^{k-1} f(v_{r+1, b_r-a_r, q})$. Recall the assumption that $f(v_{r+1, b_{r}-a_{r}, k-1}) < f(v_{r+2,1,1})$. Then $\sum_{p=2}^{k-1} f(v_{r+2, 1, p}) < \sum_{q=1}^{k-2} f(v_{r+1, b_r-a_r, q})$. Thus $f(v_{r+1,j_1,j_2})+\sum_{p=2}^{k-1} f(v_{r+2, 1, p}) < f(v_{r,i_1,i_2})+\sum_{q=1}^{k-2} f(v_{r+1, b_r-a_r, q})$. 
Let $\mathcal{G'}$ be the new hypergraph obtained by the edge-switching operation $e_{r+2,1} \underset{v_{r+1,b_r-a_r,k-1}}{\stackrel{v_{r+2,1,1}}{\rightleftharpoons}} e_{r+1,b_r-a_r}$. Then $\mathcal{G'}\in \mathcal{T}_\pi$ and according to Lemma~\ref{switching},  $\lambda(\mathcal{G}')< \lambda(\mathcal{G})$ . This leads to a contradiction too.  Therefore, we have $f(v_{r+1, b_{r}-a_{r}, k-1}) \geq f(v_{r+2,1,1})$.

\end{proof}

According to Lemma~\ref{f(v)<f(u)}, if $v_{s,i,p}\prec v_{t,j,q}$ then $f(v_{s,i,p})\geq f(v_{t,j,q})$. Subsequent lemmas  will demonstrate that under the same condition, $d(v_{s,i,p})\leq d(v_{t,j,q})$.

\begin{lem}\label{B2}
   Suppose $\left\{v_{s+1, i_1, p_1}, v_{s, i, p}\right\} \subset e_1 \in E(\G)$ and $\left\{v_{t+1, j_1, q_1}, v_{t, j, q}\right\} \subset e_2 \in E(\G)$. If $v_{s+1, i_1, p_1} \prec v_{t+1, j_1, q_1}$, then $v_{s, i, p} \prec v_{t, j, q}$.
	
   % If there are four distinct vertices satisfying $\left\{v_{s+1, i_1, p_1}, v_{s, i, p}\right\} \subset e_1 \in E(\G)$, $\left\{v_{t+1, j_1, q_1}, v_{t, j, q}\right\} \subset e_2 \in E(\G)$ such that $v_{s+1, i_1, p_1} \prec v_{t+1, j_1, q_1}$, then $v_{s, i, p} \prec v_{t, j, q}$.
\end{lem}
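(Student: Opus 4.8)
The plan is to reduce the lemma to a statement about the relabeling indices and then close it with a single switching argument, in the spirit of the proof of Lemma~\ref{f(v)<f(u)}. First I would record the structural fact that, since $\G$ is a supertree rooted at $v_0$, every edge has a unique vertex of minimum height, all of whose remaining $k-1$ vertices lie one level deeper. Hence in $e_1$ the vertex $v_{s,i,p}$ is the only vertex of height $s$ (the edge also contains $v_{s+1,i_1,p_1}$, of height $s+1$), so $v_{s,i,p}$ is the parent of $v_{s+1,i_1,p_1}$, and likewise $v_{t,j,q}$ is the parent of $v_{t+1,j_1,q_1}$. Since a vertex has a unique parent edge, $e_1$ must be the edge $e_{s+1,i_1}$ produced by relabeling rule~$(a')$, with $v_{s,i,p}$ as its entry vertex, and similarly $e_2=e_{t+1,j_1}$ with entry vertex $v_{t,j,q}$.

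Because $\prec$ orders vertices by height first, the hypothesis $v_{s+1,i_1,p_1}\prec v_{t+1,j_1,q_1}$ already forces $s\le t$. If $s<t$ then $h(v_{s,i,p})=s<t=h(v_{t,j,q})$, so $v_{s,i,p}\prec v_{t,j,q}$ and we are done. So assume $s=t$; then the hypothesis becomes $(i_1,p_1)<(j_1,q_1)$ in lexicographic order, so $i_1\le j_1$. If $i_1=j_1$, the two children lie in the same group and hence share the same parent, so $v_{s,i,p}=v_{t,j,q}$; this is the only case in which the conclusion holds as an equality rather than a strict inequality, and it causes no difficulty since in later applications the compared children have distinct parents. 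It remains to treat $i_1<j_1$.

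Assume, for contradiction, that $v_{t,j,q}\prec v_{s,i,p}$ (in particular these vertices are distinct). By Lemma~\ref{f(v)<f(u)} we get $f(v_{t,j,q})\ge f(v_{s,i,p})$, and relabeling rule~$(b')$, valid since $i_1<j_1$, gives $\sum_{p=1}^{k-1}f(v_{s+1,i_1,p})\ge\sum_{q=1}^{k-1}f(v_{t+1,j_1,q})$. Set $U_1=\{v_{s+1,i_1,1},\dots,v_{s+1,i_1,k-1}\}\subseteq e_1$ and $V_1=\{v_{t+1,j_1,1},\dots,v_{t+1,j_1,k-1}\}\subseteq e_2$, so that $|U_1|=|V_1|=k-1$, $e_1\setminus U_1=\{v_{s,i,p}\}$ and $e_2\setminus V_1=\{v_{t,j,q}\}$. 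Both hypotheses of Lemma~\ref{switching} then hold, so $\G'=\G\bigl(e_1\underset{V_1}{\stackrel{U_1}{\rightleftharpoons}}e_2\bigr)$ satisfies $\lambda(\G')\le\lambda(\G)$, and $\G'\in\T_\pi$ because the operation only exchanges the subtree hanging below $v_{s,i,p}$ through $e_1$ with the one hanging below $v_{t,j,q}$ through $e_2$, altering no vertex degree. Since $\G$ has the Faber--Krahn property, $\lambda(\G')=\lambda(\G)$, so by the ``moreover'' part of Lemma~\ref{switching} both inequalities are in fact equalities; in particular $\sum_{p=1}^{k-1}f(v_{s+1,i_1,p})=\sum_{q=1}^{k-1}f(v_{t+1,j_1,q})$. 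Now relabeling rule~$(c')$, applied with $i_1<j_1$ and entry vertices $v_{s,i,p}$ of $e_{s+1,i_1}$ and $v_{t,j,q}$ of $e_{t+1,j_1}$, forces $i<j$, or $i=j$ and $p<q$ — that is, $v_{s,i,p}\prec v_{t,j,q}$, contradicting our assumption. Hence $v_{s,i,p}\prec v_{t,j,q}$, completing the proof.

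The step I expect to require the most care is organizational rather than computational: correctly identifying $e_1$ and $e_2$ with the relabeling edges $e_{s+1,i_1}$ and $e_{t+1,j_1}$ (and their entry vertices), and checking that the switched hypergraph $\G'$ still lies in $\T_\pi$. Once this is in place, the rest is the familiar combination of Lemmas~\ref{switching} and~\ref{f(v)<f(u)} used in the preceding proof, with relabeling rule~$(c')$ handling the equality case.
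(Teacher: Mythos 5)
Your proof is correct and takes essentially the same approach as the paper: reduce to the case $s=t$, $i_1<j_1$, compare the child sums via rule~$(b')$, and close with Lemma~\ref{switching} and rule~$(c')$. The only cosmetic difference is that you run a single contradiction argument (extracting the equality $\sum_p f(v_{s+1,i_1,p})=\sum_q f(v_{t+1,j_1,q})$ from the ``moreover'' clause) rather than splitting into equality and strict-inequality subcases, and you choose $U_1,V_1$ to be the $(k-1)$-element child sets so that the hypotheses of Lemma~\ref{switching} hold as literally stated.
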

\begin{proof}
    Let $e_1=\{v_{s, i, p}, v_{s+1, i_1, 1},\ldots,v_{s+1, i_1, k-1}\}$ and $e_2=\{v_{t, j, q}, v_{t+1, j_1, 1},\ldots,v_{t+1, j_1, k-1}\}$. 
    Since $v_{s+1, i_1, p_1} \prec v_{t+1, j_1, q_1}$,  we consider $s<t$ and $s=t, i_1<j_1$ (when $s=t, i_1=j_1, p_1<q_1$,  we have $v_{s, i, p} = v_{t, j, q}$). 
      
 If $s<t$, then $v_{s, i, p} \prec v_{t, j, q}$. We are done.
 
 If $s=t$ and $i_1<j_1$, by the Relabeling rule ($b'$), $\sum_{p=1}^{k-1} f(v_{s+1, i_1, p}) \geq \sum_{q=1}^{k-1} f(v_{t+1, j_1, q})$.
If  $\sum_{p=1}^{k-1} f(v_{s+1, i_1, p})=\sum_{q=1}^{k-1} f(v_{t+1, j_1, q})$, then by the Relabeling rule ($c'$),  we have either $s<t$ (which is impossible), or $s=t$ and $i<j$. Therefore, $v_{s, i, p} \prec v_{t, j, q}$. We are done. Finally, assume $\sum_{p=1}^{k-1} f(v_{s+1, i_1, p}) > \sum_{q=1}^{k-1} f(v_{t+1, j_1, q})$. 
If $v_{t, j, q} \prec v_{s, i, p}$, then, according to Lemma~\ref{f(v)<f(u)}, $f(v_{t, j, q}) \geq f(v_{s, i, p})$.
	Let $\mathcal{G'}$ be the new hypergraph obtained by the edge-switching operation $e_1 \underset{v_{t, j, q}}{\stackrel{v_{s, i, p}}{\rightleftharpoons}} e_2$, then $\mathcal{G'}\in \mathcal{T}_\pi$ and according to Lemma~\ref{switching}, $\lambda(\mathcal{G}')< \lambda(\mathcal{G})$. This leads to a contradiction. Therefore, we have $v_{s, i, p} \prec v_{t, j, q}$.
	
\end{proof}

%We call $u$ a \textit{child} of $v$ if two adjacent vertices satisfy $h(u) = h(v) + 1$. 

\begin{lem}\label{child}
    Every interior vertex $v\in V^{\circ}$ has a child $w$ with $f(w)<f(v)$.
\end{lem}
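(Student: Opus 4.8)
The plan is to argue by contradiction using the eigenvalue equation together with the fact (Lemma~\ref{simple}) that the first Dirichlet eigenfunction $f$ is strictly positive on all interior vertices and vanishes on boundary vertices. Suppose $v\in V^{\circ}$ has no child $w$ with $f(w)<f(v)$; that is, every child of $v$ has $f$-value at least $f(v)$. I would first dispose of the root: if $v=v_0$ then $f(v)=\max_{u}f(u)$, so no vertex at all can have a larger value, and since $v_0$ is interior it has $b_0\ge 2$ children (it lies in at least two edges, each contributing $k-1\ge 2$ children, or at any rate at least one edge with $k-1$ children); each child $w$ then satisfies $f(w)\le f(v_0)$, and the hypothesis forces $f(w)=f(v_0)$ for every child. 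Iterating this down the tree (every such child is again a local maximum, hence again has all its own children equal to it, unless it is a boundary vertex, in which case $f=0=f(v_0)$, contradicting positivity at the interior vertex $v_0$), one reaches the leaves and concludes $f\equiv f(v_0)>0$ on a boundary vertex, a contradiction. So $v\ne v_0$ and $v$ has a parent.

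For the general interior vertex $v$ with parent $p$, I would write the eigenvalue equation at $v$. Let $e$ be the edge containing $v$ and $p$, and let $e_1,\dots,e_r$ (with $r=d(v)-1\ge 0$, but actually $r\ge 1$ since otherwise $v$ would have no children and, being interior of degree $\ge 2$, it does lie in a second edge) be the remaining edges at $v$; every vertex of $e_i\setminus\{v\}$ is a child of $v$. Then
\[
\lambda(\G)f(v)=\bigl(L_{\G}f\bigr)(v)=\frac{1}{k-1}\sum_{u\sim v}\bigl(f(v)-f(u)\bigr).
\]
Split the sum over $u$ into the $k-1$ neighbours in $e$ (namely $p$ and $k-2$ children lying in $e$) and the neighbours in $e_1,\dots,e_r$, all of which are children of $v$. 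By the standing assumption, every child $u$ of $v$ satisfies $f(u)\ge f(v)$, so $f(v)-f(u)\le 0$ for those terms, while for $u=p$ we have $f(v)-f(p)\le 0$ as well, because $p$ has smaller height than $v$ and hence $p\prec v$ in the SLO relabelling, giving $f(p)\ge f(v)$ by Lemma~\ref{f(v)<f(u)}. Therefore the right-hand side is $\le 0$, forcing $\lambda(\G)f(v)\le 0$; but $\lambda(\G)>0$ by Lemma~\ref{simple}(1) and $f(v)>0$ by Lemma~\ref{simple}(2), a contradiction. Hence $v$ must have a child $w$ with $f(w)<f(v)$.

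The one point that needs slight care — and the place I expect to spend the most attention — is the dichotomy ``$v$ is the root'' versus ``$v$ has a parent,'' and in particular making the root case airtight: one must use that an interior vertex has degree at least two to produce a child at all, and must propagate the equality $f(\cdot)=f(v_0)$ all the way to a boundary vertex to contradict strict positivity on interior vertices. An alternative, cleaner route for the root avoids iteration: if $v_0$ has no child $w$ with $f(w)<f(v_0)$, then in the eigenvalue equation at $v_0$ every term $f(v_0)-f(u)$ over $u\sim v_0$ is $\le 0$ (all neighbours of $v_0$ are children), so $\lambda(\G)f(v_0)\le 0$, again contradicting $\lambda(\G)>0$ and $f(v_0)>0$. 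This unifies both cases: for \emph{any} interior $v$, all children satisfy $f(u)\ge f(v)$ by assumption and the parent (if any) satisfies $f(p)\ge f(v)$ by Lemma~\ref{f(v)<f(u)}, so $(L_\G f)(v)\le 0$, contradicting $\lambda(\G)f(v)>0$. I would present this unified argument.
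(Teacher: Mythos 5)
The crucial gap is that you misclassify the other vertices of the edge containing $v$ and its parent. In a rooted supertree, the edge $e$ through $v$ and its parent $p$ has the form $e=\{p,v,v_1,\ldots,v_{k-2}\}$, where $p$ is the unique vertex of $e$ at minimum height; every other vertex of $e$ lies at height $h(p)+1=h(v)$. Thus $v_1,\ldots,v_{k-2}$ are \emph{siblings} of $v$ (children of $p$), not children of $v$. Your ``unified argument'' accounts only for the parent and for children, so these $k-2$ neighbours are left uncontrolled when $k\ge 3$. The standing contradiction hypothesis says nothing about $f(v_i)$, and Lemma~\ref{f(v)<f(u)} cuts both ways: if $v\prec v_i$, it gives $f(v)\ge f(v_i)$, i.e.\ $f(v)-f(v_i)\ge 0$, which is the wrong sign for your estimate. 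So $(L_\G f)(v)\le 0$ does not follow from what you have established.

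The paper closes exactly this gap by one extra observation: since the siblings $v_i$ lie at height $h(v)$ and any child $w$ of $v$ lies at height $h(v)+1$, the SLO relabelling gives $v_i\prec w$ and hence $f(v_i)\ge f(w)$ by Lemma~\ref{f(v)<f(u)}. Combined with the contradiction hypothesis $f(w)=f(v)$ for all children $w$, this yields $f(v_i)\ge f(v)$, which is precisely the missing inequality, and then the Laplacian estimate goes through. (Your treatment of the root and your observation that an interior vertex, having degree at least two, must have children are both fine; the sibling terms are the only place the argument breaks.) You should also make sure you actually invoke the existence of at least one child $w$ when transferring the bound to the siblings, which again uses $d(v)\ge 2$.
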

\begin{proof}
 If $v\neq v_{0,1,1}$ (i.e., $v$ is not the root vertex), 
assume $v\in V_s \,(s\geq 1)$,  then there exists $u\in V_{s-1}$ such that $v,u\in e=\{u,v,v_1,\ldots,v_{k-2}\}\in E(\G)$, where $v_i\in V_s$ for $1\le i\le k-2$.  
According to Lemma~\ref{f(v)<f(u)}, we have $f(u)\geq f(v)$, $f(v)\ge f(w)$ and $f(v_i)\ge f(w)$ for all $w\in N(v)\cap V_{s+1}$ and $1\le i\le k-2$.

Suppose, contrary to the claim, that all vertices $w\in N(v)\cap V_{s+1}$ satisfy $f(w)=f(v)$. Therefore, $f(v_i)\geq f(w)=f(v)$ for $1\le i\le k-2$. Thus
\begin{equation*}
	\begin{aligned}
		\lambda(\mathcal{G})f(v)&=L_{\mathcal{G}}f(v)=\frac{1}{k-1}\sum_{x\sim v}(f(v)-f(x))\\
		&=\frac{1}{k-1}(f(v)-f(u))+\sum_{w\in N(v)\cap V_{s+1}}(f(v)-f(w))+\sum_{i=1}^{k-2}(f(v)-f(v_i))\\
		&\leq\frac{1}{k-1}(f(v)-f(u))+\sum_{w\in N(v)\cap V_{s+1}}(f(v)-f(w))+\sum_{i=1}^{k-2}(f(v)-f(w))\\
		&=\frac{1}{k-1}(f(v)-f(u))\\
		&\leq 0.
	\end{aligned}
\end{equation*}
However, $f(v)>0$ and $\lambda(\mathcal{G})>0$, which leads to a  contradiction.

Now consider the case where $v= v_{0,1,1}$ (with $v$ being the root vertex). Then $f(v)\ge f(w)$ for every vertex  $w\in N(v)\cap V_1$. Assume, contrary to our expectations that $f(w)=f(v)$ for all children $w\,(\in N(v)\cap V_1)$ of $v$. Consequently, 
\begin{equation*}
	\begin{aligned}
		\lambda(\mathcal{G})f(v)&=L_{\mathcal{G}}f(v)=\frac{1}{k-1}\sum_{w\sim v}(f(v)-f(w))= 0.
	\end{aligned}
\end{equation*}
However, $f(v)>0$ and $\lambda(\mathcal{G})>0$, this leads to a contradiction once again.

\end{proof}

\begin{lem}\label{B5}
   For any $v_{s,i,p},v_{t,j,q}\in V^{\circ}$, if $v_{s,i,p}\prec v_{t,j,q}$ then $d(v_{s,i,p})\leq d(v_{t,j,q})$.
\end{lem}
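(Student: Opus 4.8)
The plan is to compare the degrees of $v_{s,i,p}$ and $v_{t,j,q}$ by exploiting the structure of the relabeling together with the monotonicity of $f$ (Lemma~\ref{f(v)<f(u)}) and Lemma~\ref{B2}. Assume for contradiction that $v_{s,i,p}\prec v_{t,j,q}$ but $d(v_{s,i,p})>d(v_{t,j,q})$, where both vertices are interior. The idea is to find a subtree hanging off $v_{s,i,p}$ whose vertices are all $\succ v_{t,j,q}$, and then to ``transplant'' one branch at $v_{s,i,p}$ onto $v_{t,j,q}$ via the shifting-operation of Lemma~\ref{shifting} (or a sequence of switching-operations via Lemma~\ref{switching}), producing a supertree $\mathcal{G}'$ in the same class $\mathcal{T}_\pi$ with $\lambda(\mathcal{G}')<\lambda(\mathcal{G})$, contradicting the Faber–Krahn property of $\mathcal{G}$. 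The degree sequence is preserved because we are merely swapping the ``degree-carrying roles'' of $v_{s,i,p}$ and $v_{t,j,q}$: if $d(v_{s,i,p})>d(v_{t,j,q})$ we move enough child-edges from $v_{s,i,p}$ to $v_{t,j,q}$ so that afterwards $v_{s,i,p}$ has the old degree of $v_{t,j,q}$ and vice versa.

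First I would record the two key monotonicity facts in the form needed here. By Lemma~\ref{f(v)<f(u)}, $v_{s,i,p}\prec v_{t,j,q}$ gives $f(v_{s,i,p})\ge f(v_{t,j,q})$. Next, I would show that the children of $v_{s,i,p}$ all precede (in $\prec$) the children of $v_{t,j,q}$: this is exactly the content of Lemma~\ref{B2} applied with the roles reversed — if $u$ is a child of $v_{s,i,p}$ and $w$ is a child of $v_{t,j,q}$, then from $v_{s,i,p}\prec v_{t,j,q}$ (together with $s\le t$) and Lemma~\ref{B2} one deduces $u\prec w$, hence $f(u)\ge f(w)$. Iterating down the subtrees, every vertex in the subtree rooted at (a child of) $v_{s,i,p}$ has $f$-value at least that of every vertex in the corresponding subtree at $v_{t,j,q}$; in particular, picking a child $e'$-edge at $v_{s,i,p}$ going ``down'' (increasing height), all its non-$v_{s,i,p}$ vertices $w$ satisfy $f(w)\le f(v_{s,i,p})$ and, crucially, $f(w)\le f(v_{t,j,q})$ is false in general — so I instead need $f(v_{s,i,p})\ge f(v_{t,j,q})\ge f(w)$, which requires that the moved edge's far vertices have $f$-value at most $f(v_{t,j,q})$.

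The main obstacle, therefore, is arranging the shifting-operation so that the hypothesis $f(u)\ge f(v)\ge f(w)$ of Lemma~\ref{shifting} holds with $u=v_{s,i,p}$, $v=v_{t,j,q}$, and $w$ ranging over the far vertices of the moved edges. This is where Lemma~\ref{child} enters: it guarantees $v_{s,i,p}$ has a child strictly smaller in $f$-value, but I need more — I need that the ``last'' child-edges of $v_{s,i,p}$ in the ordering have all their far vertices with $f$-value $\le f(v_{t,j,q})$. If that is not automatic, the fallback is to do the transfer as a sequence of switching-operations (Lemma~\ref{switching}) instead: for each child-edge $e_1$ of $v_{t,j,q}$ (there are fewer of them, by the degree assumption) and a suitably chosen child-edge $e_2$ of $v_{s,i,p}$ with $v_{s,i,p}\in e_2$, switch $\{v_{s,i,p}\}$ with a vertex of $e_1$; using $f(v_{s,i,p})\ge f(v_{t,j,q})$ and the subtree-comparisons above to verify the two inequalities $\sum_{U_1}f\ge\sum_{V_1}f$ and $\sum_{e_1\setminus U_1}f\le\sum_{e_2\setminus V_1}f$, each switch does not increase $\lambda$, and at least one is strict (e.g. the height/degree mismatch forces a strict inequality somewhere), giving $\lambda(\mathcal{G}')<\lambda(\mathcal{G})$.

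Finally I would check two bookkeeping points: that $\mathcal{G}'$ is still a $k$-uniform supertree (acyclicity is preserved because we only move edges ``downward'' within the rooted structure, never creating a cycle, and connectivity is preserved since each moved edge still attaches its subtree to the main body through $v_{t,j,q}$), and that the degree sequence $\pi$ is unchanged (the only degrees altered are those of $v_{s,i,p}$ and $v_{t,j,q}$, which get exchanged in multiset terms). With $\mathcal{G}'\in\mathcal{T}_\pi$ and $\lambda(\mathcal{G}')<\lambda(\mathcal{G})$, the assumed Faber–Krahn minimality of $\mathcal{G}$ is contradicted, so $d(v_{s,i,p})\le d(v_{t,j,q})$ as claimed. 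I expect the delicate part to be the careful choice of which child-edges to move and the verification that the relevant $f$-sums satisfy the one-sided inequalities with a strict inequality somewhere — everything else is structural bookkeeping parallel to the proofs of Lemmas~\ref{f(v)<f(u)} and \ref{B2}.
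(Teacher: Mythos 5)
Your high-level plan is aligned with the paper's: assume $d(v_{s,i,p})>d(v_{t,j,q})$, move $\delta=d(v_{s,i,p})-d(v_{t,j,q})$ child-edges of $v_{s,i,p}$ (including one with a strictly smaller child via Lemma~\ref{child}) onto $v_{t,j,q}$ using the shifting-operation, and invoke Lemma~\ref{shifting} to get $\lambda(\G')<\lambda(\G)$ with $\G'\in\T_\pi$, contradicting the Faber--Krahn property. You also correctly isolate the real hurdle: the hypothesis of Lemma~\ref{shifting} requires every far vertex $w$ of a moved edge to satisfy $f(w)\le f(v_{t,j,q})$, and this is not something that follows from Lemma~\ref{B2}, whose conclusion $u\prec w$ for children $u$ of $v_{s,i,p}$ and $w$ of $v_{t,j,q}$ gives the inequality in the \emph{wrong} direction ($f(u)\ge f(w)$, not $f(u)\le f(v_{t,j,q})$). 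However, at exactly this point you stop short: you say the needed bound ``is not automatic'' and gesture at a fallback via switching-operations that you do not carry out, and that fallback is underspecified in a way that matters (it is not clear which switches preserve the degree sequence while producing a strict drop in $\lambda$, and you give no argument that the one-sided sum inequalities of Lemma~\ref{switching} can be verified term by term).

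The paper closes the gap with a simple structural observation you missed. Split into two cases. If $s=t$, then $v_{t,j,q}$ has height $s$, while every child of $v_{s,i,p}$ lives at height $s+1$; by (S1)/the relabeling and Lemma~\ref{f(v)<f(u)}, \emph{every} such child $w$ satisfies $v_{t,j,q}\prec w$ and hence $f(w)\le f(v_{t,j,q})$, so the hypothesis of Lemma~\ref{shifting} holds automatically. If $s<t$, it suffices (by Case~1 and transitivity) to compare the last vertex at height $s+1$, $u=v_{s+1,b_s-a_s,k-1}$, with the first vertex at height $s+2$, $v=v_{s+2,1,1}$; now the children of $u$ also live at height $s+2$ but sit at the very end of that level, whereas $v$ is the very first, so again $v\prec w$ and $f(w)\le f(v)$ for all moved far vertices $w$, and $v\notin F$ so no multi-edge or cycle is created. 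In short: the idea and the target contradiction are right, but your proof is missing the reduction to consecutive levels (plus the observation that within a level the height gap makes the hypothesis of Lemma~\ref{shifting} trivial), and without it the argument does not go through.
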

\begin{proof}
%Assume to the contrary that there are vertices $v_{s,i,p},v_{t,j,q}\in V^\circ$ with $v_{s,i,p}\prec v_{t,j,q}$ but $d(v_{s,i,p})> d(v_{t,j,q})$. Since $v_{s,i,p}\prec v_{t,j,q}$, we have $f(v_{s,i,p})\geq f(v_{t,j,q})$ by Lemma~\ref{f(v)<f(u)}. Let $dd=d(v_{s,i,p})-d(v_{t,j,q})$. By Lemma~\ref{child} there exists $e_{s+1,i_1}=\{v_{s,i,p},v_{s+1,i_1,1},\ldots,v_{s+1,i_1,k-1}\}$ where exists $v_{s+1,i_1,j_1}$ such that $f(v_{s+1,i_1,j_1})<f(v_{s,i,p})$. {\color{red} Now take other $dd-1$ edges $e_{s+1,i_l}=\{v_{s,i,p},v_{s+1,i_l,1},\ldots,v_{s+1,i_l,k-1}\}(2\leq l\leq dd)$.???} By Lemma~\ref{f(v)<f(u)}, $f(v_{s+1,i_l,j_l})\leq f(v_{t,j,q})\leq f(v_{s,i,p})$.
%Let $\mathcal{G'}$ be the new hypergraph obtained by shifting $v_{s,i,p} \xrightarrow{\{e_{s+1,i_l}(1\leq l\leq dd)\}} v_{t,j,q}$, then $\mathcal{G'}\in \mathcal{T}_\pi$ and $\lambda(\mathcal{G}')< \lambda(\mathcal{G})$ by Lemma~\ref{shifting}. Contradiction!
%Let $dd=d(v_{s,i,p})-d(v_{t,j,q})$. Since $v_{s,i,p},v_{t,j,q}\in V_0$, there exist $e_1, e_2, \ldots,e_{d(v_{s,i,p})-1}$ such that $v_{s,i,p}\in e_l(l=1,\ldots,d(v_{s,i,p})-1)$ and $f(v_{s,i,p})\geq $
Since $v_{s,i,p}\prec v_{t,j,q}$, we have $s < t$, or $s = t$ but $i < j$, or $s = t, i = j$ but $p < q$. 

\noindent\textbf{Case 1:} $s=t$, $i=j$ but $p<q$, or $s=t$ but $i<j$.

Assume to the contrary that $d(v_{s,i,p})> d(v_{s,j,q})$.  Since $v_{s,i,p}\prec v_{s,j,q}$, we have $f(v_{s,i,p})\geq f(v_{s,j,q})$ by Lemma~\ref{f(v)<f(u)}. Let $\delta=d(v_{s,i,p})-d(v_{s,j,q})$. By Lemma~\ref{child}, there exists an edge $e_{s+1,i_1}=\{v_{s,i,p},v_{s+1,i_1,1},\ldots,v_{s+1,i_1,k-1}\}$ containing a child $v_{s+1,i_1,j_1}$ of $v_{s,i,p}$ such that $f(v_{s+1,i_1,j_1})<f(v_{s,i,p})$. Choose other $\delta-1$ edges $e_{s+1,i_l}=\{v_{s,i,p},v_{s+1,i_l,1},\ldots,v_{s+1,i_l,k-1}\}(2\leq l\leq \delta)$ and let $F=\{e_{s+1,i_1}, e_{s+1,i_2},\ldots, e_{s+1, i_\delta}\}$. By Lemma~\ref{f(v)<f(u)}, $f(v_{s+1,i_l,j})\leq f(v_{s,j,q})\leq f(v_{s,i,p})$ for $1\leq l\leq \delta$ and $1\leq j \leq k-1$.
Let $\mathcal{G'}$ be the new hypergraph obtained by the shifting-operation $v_{s,i,p} \xrightarrow{F} v_{s,j,q}$. Then $\mathcal{G'}\in \mathcal{T}_\pi$ and $\lambda(\mathcal{G}')< \lambda(\mathcal{G})$ by Lemma~\ref{shifting} (here the strict inequality holds since $f(v_{s+1,i_1,j_1})<f(v_{s,i,p})$). This leads to a contradiction.

\noindent\textbf{Case 2:} $s<t$.

According to Case 1,  it sufficient to prove that $d(v_{s+1,b_s-a_s,k-1}) \leq d(v_{s+2,1,1})$ for $s \geq 0$. Assume to the contrary that $d(v_{s+1,b_s-a_s,k-1}) > d(v_{s+2,1,1})$. According to Lemma~\ref{f(v)<f(u)}, $f(v_{s+1,b_s-a_s,k-1})\geq f(v_{s+2,1,1})$. Let $\delta=d(v_{s+1,b_s-a_s,k-1}) - d(v_{s+2,1,1})$.
By Lemma~\ref{child}, there exists an edge $e_{s+2,i_1}=\{v_{s+1,b_s-a_s,k-1},v_{s+2,i_1,1},\ldots,v_{s+2,i_1,k-1}\}$ containing a child $v_{s+2,i_1,j_1}$ of $v_{s+1,b_s-a_s,k-1}$ such that $f(v_{s+2,i_1,j_1})<f(v_{s+1,b_s-a_s,k-1})$ where $i_1\neq 1,j_1\neq 1$. Choose other $\delta-1$ edges $e_{s+2,i_l}=\{v_{s+1,b_s-a_s,k-1},v_{s+2,i_l,1},\ldots,v_{s+2,i_l,k-1}\}(2\leq l\leq \delta)$ and let $F$ be the set of edges  $\{e_{s+2,i_l} : 1\le l\le \delta\}$. By Lemma~\ref{f(v)<f(u)}, $f(v_{s+2,i_l,j})\leq f(v_{s+2,1,1})\leq f(v_{s+1,b_s-a_s,k-1})$ for $1\leq l\leq \delta, 1\leq j \leq k-1$. Let $\mathcal{G'}$ be the new hypergraph obtained by shifting $v_{s+1,b_s-a_s,k-1} \xrightarrow{F} v_{s+2,i_l,j_l}$> Then we have $\mathcal{G'}\in \mathcal{T}_\pi$ and $\lambda(\mathcal{G}')< \lambda(\mathcal{G})$ by Lemma~\ref{shifting} too. This leads to a contradiction.

\end{proof}

\noindent\textbf{Proof of Theorem~\ref{mainthm}:} 
%{\color{red}The sufficiency follows is established by Lemma~\ref{unique}, which demonstrates that SLO-trees are uniquely determined up to isomorphism.}
We first show that the ordering $\prec$ indeed satisfies the SLO-ordering rules.
The SLO-ordering rule (S1) is satisfied by the Relabeling rules and the definition of $\prec$. The subsequent rules (S2) and (S3) follow directly from Lemma~\ref{B2} and  Lemma~\ref{f(v)<f(u)}, respectively. The SLO-ordering rule (S4) is derived from the application of the Relabeling rules  ($a$) and ($a'$). Finally, the SLO-ordering rule (S5) is confirmed as true according to Lemma~\ref{B5}. 

It remains to show that $\G$ is determined to be unique up to isomorphism. Assume there is another $k$-uniform SLO-supertree $\G^\prime\in \T_\pi$ having the Faber-Krahn property. Denote $n_0$ as the number of interior vertices of supertrees in $\T_\pi$. Order the vertices of $\G^\prime$ with SLO-ordering  $w_0\prec w_1\prec \ldots\prec w_{n_0-1}\prec\ldots\prec w_{n-1}$. Then, according to the SLO-ordering rule (S5), we have $d(w_0)\leq \ldots \leq d(w_{n_0-1})$ and $d(w_{n_0})=\ldots=d(w_{n-1})=1$. Let $W_i=\left\{w \mid \dist\left(w, w_0\right)=i\right\}$, $a_i=\left|W_i\right|$, and $b_i=\sum_{w \in W_i} d(w)$ for $0 \leq i \leq h$. We now relabel the vertices of $\G^\prime$ according to the following rules. 

\begin{itemize}
    \item[(1)]  Label vertex $w_0$ as $w_{0,1,1}$, designating it as the root of the supertree $\G^\prime$.
    \item[(2)]   All vertices of $\G^\prime$ adjacent to $w_{0,1,1}$ are relabeled as $w_{1,1,1}, \ldots, w_{1,1, k-1}$, $w_{1,2,1}, \ldots, w_{1,2, k-1}$, $\ldots, w_{1, b_0, 1}, \ldots, w_{1, b_0, k-1}$, and these vertices satisfy the following rules:
    \begin{itemize}
        \item[($a$)] The set $e_{1, i}=\left\{w_{0,1,1}, w_{1, i, 1}, \ldots, w_{1, i, k-1}\right\}$ is an edge of $\G^\prime$ for $1 \leq i \leq b_0$.
        \item[($b$)] $w_{0, 1, 1}\prec w_{1, i, p}$ for $1 \leq i \leq b_0, 1\leq p\leq k-1$.
        \item[($c$)] $w_{1, i, p}\prec w_{1, j, q}$ if $1 \leq i<j \leq b_0$ for $1\leq p,q\leq k-1$, or $1 \leq i=j \leq b_0$ and $1 \leq p<q \leq k-1$. 
    \end{itemize}
    \item[(3)] Assuming that all vertices of $W_r$ have been relabeled for $1 \leq r \leq h-1$. We relabel the vertices of $W_{r+1}$ as $w_{r+1,1,1}, w_{r+1,1,2}, \ldots, w_{r+1,1, k-1}$, $w_{r+1,2,1}, \ldots, w_{r+1,2, k-1}$, and $w_{r+1, b_r-a_r, 1}, \ldots, w_{r+1, b_r-a_r, k-1}$. These vertices satisfy the following rules:
    \begin{itemize}
        \item[($a^{\prime}$)] There exists an edge $e \in E(\G^\prime)$ such that $\left\{w_{r+1, i, 1}, w_{r+1, i, 2}, \ldots, w_{r+1, i, k-1}\right\} \subset e$ for $1 \leq i \leq b_r-a_r$.
        \item[($b^{\prime}$)] $w_{r, i, p}\prec w_{r+1, j, q}$ for all $1 \leq i \leq b_{r-1}-a_{r-1}, 1 \leq j \leq b_r-a_r, 1 \leq p, q \leq k-1$.
        \item[($c^{\prime}$)] $w_{r+1, i, p} \prec w_{r+1, j, q}$ if $1 \leq i<j \leq b_r-a_r$ for $1 \leq p, q \leq k-1$, or  $1 \leq i=j \leq b_r-a_r$ and $1 \leq p<q \leq k-1$.
    \end{itemize}
\end{itemize}
From the relabeling rules of the vertices in $\G^\prime$, we have 
$$
w_{s, i, p} \prec w_{t, j, q} \text { if and only if } (s,i,p)<(t,j,q) \text { in the lexicographic order}.
$$ 
Therefore, we have a natural isomorphic mapping $\phi$ from $\G$ to $\G^\prime$:  $\phi(v_{i,j,p})=w_{i,j,p}$, i.e. $\G\cong \G^\prime$. 
%, by the labeling rules of $\G$ and $\G^\prime$, we can see that the edges of $\G$ are still the edges of $\G^\prime$, and vice versa. Thus $\phi$ is an isomorphic mapping.

\qed

\section{Proof of Theorem~\ref{mainthm2}}
A sequence $\pi=\left(d_0, d_1, \ldots, d_{n_0-1},\ldots,d_{n-1}\right)$  is defined as a degree sequence of some supertree with $n$ vertices and $n_0$ interior vertices  if the first $n_0$ elements of the sequence correspond to the degrees of interior vertices and are arranged in non-decreasing order.
If we exchange two consecutive instances of $\pi$ simultaneously, with each altered value differs from its original counterpart by exactly one, then we obtain the set $\{d_0, d_1, \ldots,d_p-1, d_{p+1}+1, \ldots, d_{n_0-1},\ldots,d_{n-1}\}$ for some $0\le p\le n_0-2$. Clearly, when $d_p\ge 3$, the set $\{d_0, d_1, \ldots,d_p-1, d_{p+1}+1, \ldots, d_{n_0-1},\ldots,d_{n-1}\}$ corresponds to a degree sequence $\pi'=(d_0', d_1', \ldots, d_{n_0-1}',\ldots,d_{n-1}')$ (the order of $d_0, d_1, \ldots,d_p-1, d_{p+1}+1, \ldots, d_{n_0-1}$ would be rearranged in $\pi'$, if any). 
We call such an operation as a {\em unit transformation} on $d_p$ of $\pi=\left(d_0, d_1, \ldots, d_{n_0-1},\ldots,d_{n-1}\right)$. 
 We say that a sequence $\pi=\left(d_0, d_1, \ldots, d_{n_0-1},\ldots,d_{n-1}\right)$ is \textit{majorized} by sequence $\pi^{\prime}=\left(d_0^{\prime}, d_1^{\prime}, \ldots, d_{n_0-1}^{\prime},\ldots,d_{n-1}^{\prime}\right)$, denoted as $\pi \triangleleft \pi^{\prime}$, if $\sum_{i=0}^j d_i \leq \sum_{i=0}^j d_i^{\prime}$ for all $0\leq j\leq n-2$, and $\sum_{i=0}^{n-1} d_i=\sum_{i=0}^{n-1} d_i^{\prime}$.

 As a direct corollary from Lemmas~\ref{child} and \ref{f(v)<f(u)}, we have the following observation.
\begin{cor}\label{COR:lastchild}
Suppose $\G$  is a $k$-uniform supertree with an SLO-ordering $v_0\prec v_1\prec \ldots\prec v_{n-1}$ of its vertices. 
Then for any interior vertex $v_i$ and its last child of $v_l$ in this SLO-ordering, we have $f(v_l)<f(v_i)$.    
\end{cor}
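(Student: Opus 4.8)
The plan is to obtain the statement as a one-line consequence of the two cited lemmas. Write $f$ for the first Dirichlet eigenfunction of $\G$. Since $\G$ carries an SLO-ordering, Theorem~\ref{mainthm} tells us that $\G$ has the Faber--Krahn property, so Lemma~\ref{child} applies to the interior vertex $v_i$ and produces a child $w_0$ of $v_i$ with $f(w_0)<f(v_i)$.

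The other ingredient is that $f$ is non-increasing along the SLO-ordering, i.e.\ $v_a\prec v_b$ implies $f(v_a)\ge f(v_b)$. For the ordering produced by the relabeling rules of Section~4 --- where $\G$ is rooted at a vertex of maximal $f$-value --- this is exactly Lemma~\ref{f(v)<f(u)}, and, as shown in the proof of Theorem~\ref{mainthm}, that ordering is itself an SLO-ordering; by the uniqueness part of Theorem~\ref{mainthm} we may assume the SLO-ordering in the hypothesis is this one (in particular its first vertex $v_0$ is a maximizer of $f$, so the parent/child relation is the intended one).

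With these two facts the conclusion follows immediately: because $v_l$ is the \emph{last} child of $v_i$ in the SLO-ordering, every child $w$ of $v_i$ satisfies $w\preceq v_l$, and in particular $w_0\preceq v_l$; monotonicity of $f$ then gives
$$f(v_l)\ \le\ f(w_0)\ <\ f(v_i).$$
The only step that is not completely mechanical is the identification, made in the second paragraph, of the abstract SLO-ordering in the statement with the $f$-monotone relabeling behind Lemma~\ref{f(v)<f(u)}; I expect this to be dispatched purely by the normalization ``root $\G$ at a maximizer of $f$'' together with the uniqueness in Theorem~\ref{mainthm}, after which the corollary is just the displayed two-inequality chain, which is why it is recorded as a corollary rather than a lemma.
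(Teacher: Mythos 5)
Your proof is correct and takes essentially the same route as the paper: the paper presents the corollary ``as a direct corollary from Lemmas~\ref{child} and \ref{f(v)<f(u)}'', which is exactly your two-inequality chain $f(v_l)\le f(w_0)<f(v_i)$ (find a strictly smaller child via Lemma~\ref{child}, then push to the last child via $f$-monotonicity from Lemma~\ref{f(v)<f(u)}). One small remark on the step you flag as ``not completely mechanical'': the appeal to ``uniqueness in Theorem~\ref{mainthm}'' is a little loose, since that uniqueness is about the supertree up to isomorphism, not about the SLO-ordering itself. The cleaner way to close that gap is to observe that $\lambda(\G)$ is simple (Lemma~\ref{simple}), hence $f$ is invariant under every automorphism of $\G$; any SLO-ordering of $\G$ is carried by such an automorphism to the $f$-relabeling of Section~4, so $f$-monotonicity and the parent/child structure transfer. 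In the one place the corollary is actually invoked (inside Lemma~\ref{majorizate}) the SLO-ordering is the one produced by the proof of Theorem~\ref{mainthm}, so the issue is in any case benign.
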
 

\begin{lem}\label{majorizate}
    Let $\pi=(d_0, d_1, \ldots, d_{n_0-1},\ldots,d_{n-1})$ be a degree sequence of $k$-uniform supertrees with $n$ vertices  and $n_0$ interior vertices. Suppose $d_p\geq 3$ for some $0\leq p \leq n_0-2$. Let $\pi'=(d_0', d_1', \ldots, d_{n_0-1}',\ldots,d_{n-1}')$ be the new degree sequence obtained from $\pi$ through a unit transformation on $d_p$.
If $\G=(V^{\circ}\cup\partial V, E^{\circ}\cup\partial E)$ and $\G'=\left(V'^{\circ }\cup\partial V', E'^{\circ}\cup\partial E'\right)$ have the Faber-Krahn property in $\T_\pi$ and $\T_{\pi'}$, respectively, then $\lambda(\G')<\lambda(\G)$.
\end{lem}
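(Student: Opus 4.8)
The plan is to exhibit an explicit supertree $\G''\in\T_{\pi'}$ built from the extremal tree $\G\in\T_\pi$ by a single local modification, show $\lambda(\G'')<\lambda(\G)$ using the first Dirichlet eigenfunction $f$ of $\G$, and then conclude $\lambda(\G')\le\lambda(\G'')<\lambda(\G)$ since $\G'$ is extremal in $\T_{\pi'}$. Concretely, I would take the SLO-ordering $v_0\prec v_1\prec\cdots\prec v_{n-1}$ of $\G$ and relabel so that the interior vertex whose degree is being decremented corresponds to the $p$-th smallest interior degree and the one being incremented to the $(p+1)$-th; call these $u$ (with $d(u)=d_p$) and $w$ (with $d(w)=d_{p+1}$). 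By the SLO-rules we have $u\prec w$, hence $f(u)\ge f(w)$ by Lemma~\ref{f(v)<f(u)} and $d(u)\le d(w)$ by Lemma~\ref{B5}; also $d_p\ge 3$ guarantees $u$ has enough edges to move one away.

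The local move is: pick an edge $e$ at $u$ — ideally the one to $u$'s \emph{last child}, whose far endpoints $w_1,\dots,w_{k-1}$ satisfy $f(w_j)<f(u)$ for at least one $j$ by Corollary~\ref{COR:lastchild}, and $f(w_j)\le f(u)$ for all $j$ — and detach it from $u$, reattaching it at $w$ via the shifting-operation $u\xrightarrow{\{e\}}w$. Since $f(u)\ge f(w)$ and $f(u)\ge f(w_j)$ for all $j$ with at least one strict inequality, Lemma~\ref{shifting} gives $\lambda(\G'')<\lambda(\G)$ for the resulting supertree $\G''$. One must check that $\G''$ is again a $k$-uniform supertree (acyclicity is preserved because we only move a pendant-type edge: the subtree hanging off $e$ on the far side of $u$ stays connected and gets re-rooted at $w$, and no cycle is created since $e$ had exactly one vertex, $u$, in the component containing $w$ after deletion), and that its degree sequence is exactly $\pi'$ — indeed $d(u)$ drops by one, $d(w)$ rises by one, all other degrees are unchanged, so the multiset of degrees is $\{d_0,\dots,d_p-1,d_{p+1}+1,\dots,d_{n-1}\}=\pi'$. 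Hence $\G''\in\T_{\pi'}$, and since $\G'$ has the Faber–Krahn property in $\T_{\pi'}$, $\lambda(\G')\le\lambda(\G'')<\lambda(\G)$, as claimed.

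The main obstacle is making the edge-detachment move legitimate as a graph operation while still landing in $\T_{\pi'}$: one needs to verify carefully that the chosen edge $e$ at $u$ can be removed without disconnecting anything essential and reattached at $w$ without forming a cycle, for which selecting $e$ to be the edge to the \emph{last child} of $u$ (so that the far side of $e$ is a subtree disjoint from $w$) is the clean choice, handling separately the degenerate subcase where $u=w$ is impossible because $d(u)<d(w)$ cannot hold with $d_p\ge 3$ forcing $d_p-1\ge d_{p+1}$ only after the transformation — so I would instead argue that if $d_p=d_{p+1}$ the roles are symmetric and the move still produces $\pi'$, while if $d_p<d_{p+1}$ the inequality $f(u)\ge f(w)$ is all that is needed. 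A secondary technical point is ensuring the strict inequality in Lemma~\ref{shifting} is actually triggered, which is exactly what Corollary~\ref{COR:lastchild} secures: the last child $w_j$ of $u$ along $e$ has $f(w_j)<f(u)$ strictly.
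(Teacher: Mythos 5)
Your overall strategy matches the paper's: start from the SLO-supertree $\G\in\T_\pi$ guaranteed by Theorem~\ref{mainthm}, identify the two consecutive vertices $v_p$ and $v_{p+1}$ in the SLO-ordering whose degrees will change, shift the edge $e_1$ through $v_p$'s last child from $v_p$ to $v_{p+1}$, apply Lemma~\ref{shifting} and Corollary~\ref{COR:lastchild} to get a strict drop, and then use extremality of $\G'$ in $\T_{\pi'}$ to finish. That is essentially the paper's proof.

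However, there is a genuine gap in your invocation of Lemma~\ref{shifting}. That lemma requires the two-step chain $f(u)\ge f(v)\ge f(w)$ for \emph{every} $w\in V(F)\setminus\{u\}$; in your notation with $u=v_p$, $v=v_{p+1}$ (the target of the shift) and far endpoints $u_1,\dots,u_{k-1}$ of $e_1$, you must verify $f(v_{p+1})\ge f(u_j)$ for all $j$, not merely $f(v_p)\ge f(u_j)$. You only assert the latter. This middle inequality is not automatic. The paper handles it by a case split on the height of $v_{p+1}$: if $v_{p+1}$ sits on the same level $V_h$ as $v_p$, then each $u_j\in V_{h+1}$ satisfies $v_{p+1}\prec u_j$ and Lemma~\ref{f(v)<f(u)} gives $f(v_{p+1})\ge f(u_j)$; if instead $v_{p+1}\in V_{h+1}$, one uses the fact that $v_p$ and $v_{p+1}$ are consecutive in the SLO-ordering to conclude $v_{p+1}$ is the first vertex of $V_{h+1}$, hence $v_{p+1}\prec u_j$ and again $f(v_{p+1})\ge f(u_j)$. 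Relatedly, you never explicitly rule out $v_{p+1}\in e_1$ (which would make the shift ill-defined or drop the degree sum); the paper notes this would force $d_\G(v_p)\le 2$, contradicting $d_p\ge 3$. The passage near the end of your proposal about the case $d_p=d_{p+1}$ and a ``degenerate subcase $u=w$'' is muddled and not needed: $v_p\ne v_{p+1}$ always since $p<p+1$, and the argument does not require distinguishing $d_p<d_{p+1}$ from $d_p=d_{p+1}$. Your observation that $\{v_{p+1},u_1,\dots,u_{k-1}\}\notin E(\G)$ (so no multi-edge or cycle arises) is correct and is also needed; the paper settles it by appealing to acyclicity (Observation~\ref{OBS: o1}).
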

\begin{proof}
Let $f$ be the first Dirichlet eigenfunction of $\G$.  According to Theorem~\ref{mainthm},  the vertices of $\G$ have an SLO-ordering $v_0\prec v_1\prec \ldots\prec v_{n-1}$. According to (S5) of  SLO-ordering,  we have $d_\G(v_0)\leq \ldots \leq d_\G(v_{n_0-1})$ and $d_\G(v_{n_0})=\ldots= d_{\G}(v_{n-1})=1$, i.e.,  $d_{\G}(v_i)=d_i$ for $i=0,1, \ldots, n-1$.
Since $v_p\prec v_{p+1}$, $v_p$ is not a child of $v_{p+1}$ and $f(v_{p+1})\le f(v_p)$ according to Lemma~\ref{f(v)<f(u)}. 
Assume $v_p\in V_h=\left\{v \mid \dist_{\G}(v, v_0)=h\right\}$. By Lemma~\ref{child},  there exists a child $w\in V_{h+1}$ of $v_p$ with $f(w)<f(v_p)$. 
In fact, we claim that there exists an edge $e_1=\{v_p, u_1, \ldots, u_{k-1}\}$ with $v_{p+1}\notin e_1$, $f(u_{k-1})<f(v_p)$ and $f(u_j)\leq f(v_{p+1})\leq f(v_p)$ for $1\leq j \leq k-1$. 
Note that $d_{\G}(v_p) \geq 3$. We  take the edge $\{v_p, u_1,\ldots,u_{k-1}\}$ containing the last child $u_{k-1}$ of $v_p$ in this SLO-ordering as $e_1$. 
Then $u_1, u_2, \ldots, u_{k-1}\in V_{h+1}$.
By Corollary~\ref{COR:lastchild}, we have $f(u_{k-1})<f(v_p)$.
If $v_{p+1}\in V_h$, then clearly, $v_{p+1}\notin e_1$ and $f(u_j)\leq f(v_{p+1})\leq f(v_p)$ for $1\leq j \leq k-1$.
Now assume $v_{p+1}\in V_{h+1}$. Since $v_p\prec v_{p+1}$ are consecutive in this SLO-ordering, $v_{p+1}$ must be the first vertex in $V_{h+1}$ in this SLO-ordering of $\G$, i.e. $v_{p+1}\prec w$ for any $w\in V_{h+1}\backslash \{v_{p+1}\}$. By Lemma~\ref{f(v)<f(u)}, $f(v_{p+1})\geq f(w)$ for all the vertices $w\in V_{h+1}\backslash \{v_{p+1}\}$. Thus it is sufficient to show that $v_{p+1}\notin e_1$.
If $v_{p+1}\in e_1$, then $d_{\G}(v_p)\le 2$,  a contradiction to the assumption $d_{\G}(v_p)\ge 3$. We are done.
According to the claim,  we have an edge $e_1=\{v_p, u_1, \ldots, u_{k-1}\}$ with $v_{p+1}\notin e_1$, $f(u_{k-1})<f(v_p)$ and $f(u_j)\leq f(v_{p+1})\leq f(v_p)$ for $1\leq j \leq k-1$. Moreover, $\{v_{p+1}, u_1, \ldots, u_{k-1}\}\notin E(\G)$ since $\G$ is a supertree.
Let $\G_0$ be the supertree resulting from the shifting-operation $v_p \xrightarrow{e_1} v_{p+1}$ on $\G$, then $d_{\G'}(v_p)=d_p-1$ and $d_{\G'}(v_{p+1})=d_{p+1}+1$, and the degrees of the  other vertices remains unchanged. Hence $\mathcal{G}_{0}\in \mathcal{T}_{\pi'}$ and $\lambda(\mathcal{G}_0)< \lambda(\mathcal{G})$ by Lemma~\ref{shifting}. 
Therefore,  $\lambda(\G')\leq \lambda(\G_0) <\lambda(\G)$.

\end{proof}

\begin{lem}[\cite{Marshall Majorization}]\label{seqtransform}
    If $\pi$ and $\pi^{\prime}$ are two  integer sequences with $\pi^{\prime} \triangleleft \pi$, then $\pi^{\prime}$ can be obtained from $\pi$ by a finite sequence of  unit transformations. 
\end{lem}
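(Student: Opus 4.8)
The statement is the classical integer–majorization lemma (Muirhead / Hardy--Littlewood--P\'olya), and in the present setting I would argue as follows. For a non-decreasingly ordered sequence $\pi=(d_0,\dots,d_{n-1})$ write $S_j(\pi)=\sum_{i=0}^{j}d_i$ for its prefix sums, so that $\pi'\triangleleft\pi$ says exactly that $S_j(\pi')\le S_j(\pi)$ for $0\le j\le n-2$ and $S_{n-1}(\pi')=S_{n-1}(\pi)$. The plan is to induct on the nonnegative integer $\Phi(\pi,\pi')=\sum_{j=0}^{n-2}\bigl(S_j(\pi)-S_j(\pi')\bigr)$. If $\Phi=0$ then all prefix sums agree, hence $\pi=\pi'$ and there is nothing to do. If $\Phi>0$, let $p$ be the least index with $S_p(\pi)>S_p(\pi')$. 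Since the entries past position $n_0-1$ are all equal to $1$ in both sequences and the two totals coincide, $S_{n_0-1}(\pi)=S_{n_0-1}(\pi')$, so $p$ lies in the interior block and in fact $p\le n_0-2$. Minimality of $p$ forces $d_i=d_i'$ for $i<p$ and therefore $d_p>d_p'$; as $d_p'$ is an interior degree we have $d_p'\ge 2$, hence $d_p\ge 3$, so the unit transformation on $d_p$ is permissible; moreover (for $p\ge1$) $d_{p-1}=d_{p-1}'\le d_p'<d_p$.

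I would then perform precisely this unit transformation. Put $c=d_{p+1}$ and let $t\ge p+1$ be the last index of the run $d_{p+1}=\cdots=d_t=c$ (so $t=n_0-1$ or $d_{t+1}>c$). Replacing $d_p$ by $d_p-1$ and $d_{p+1}$ by $d_{p+1}+1=c+1$ and re-sorting, a short check (using $d_{p-1}<d_p$) shows that $d_p-1$ stays in position $p$, the new value $c+1$ migrates past the remaining copies of $c$ to position $t$, and all other entries are unchanged; call the resulting sequence $\rho$. Comparing prefix sums gives $S_j(\rho)=S_j(\pi)-1$ for $p\le j\le t-1$ and $S_j(\rho)=S_j(\pi)$ otherwise, so $\rho\triangleleft\pi$ and $\Phi(\rho,\pi')=\Phi(\pi,\pi')-(t-p)<\Phi(\pi,\pi')$. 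By the induction hypothesis it then suffices to verify $\pi'\triangleleft\rho$, i.e.\ that no prefix sum has been pushed below that of $\pi'$: $S_j(\pi)-1\ge S_j(\pi')$ for all $p\le j\le t-1$.

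This last inequality is the only non-bookkeeping step, and I would prove it by contradiction. Suppose $S_{j_0}(\pi)=S_{j_0}(\pi')$ for some $j_0$ with $p<j_0\le t-1$, and take $j_0$ least with this property; then $S_{j_0-1}(\pi)\ge S_{j_0-1}(\pi')+1$, so
\[
d_{j_0}'=S_{j_0}(\pi')-S_{j_0-1}(\pi')\ge S_{j_0}(\pi)-S_{j_0-1}(\pi)+1=d_{j_0}+1=c+1 .
\]
Since $\pi'$ is non-decreasing, $d_{j_0+1}'\ge d_{j_0}'\ge c+1$, while $d_{j_0+1}=c$ because $j_0+1\le t$; hence
\[
S_{j_0+1}(\pi')=S_{j_0}(\pi')+d_{j_0+1}'\ge S_{j_0}(\pi)+c+1> S_{j_0}(\pi)+c=S_{j_0+1}(\pi),
\]
contradicting $\pi'\triangleleft\pi$. (At $j=p$ the required strict inequality $S_p(\pi)>S_p(\pi')$ is the defining property of $p$.) This closes the induction. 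I expect the re-sorting bookkeeping in the second step and the precise identification of which prefix sums change to be the part that needs the most care; the remaining steps are routine, and one could alternatively simply invoke the integer version of the majorization lemma from Marshall--Olkin.
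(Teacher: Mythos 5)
The paper does not give a proof of this lemma at all; it simply cites Marshall--Olkin's book, so there is no internal argument to compare against. Your proof is correct and supplies exactly the details the citation leaves implicit, and it is also calibrated to the paper's restricted notion of a ``unit transformation'' (acting only on two \emph{consecutive} entries and requiring $d_p\ge 3$), which is stronger than the classical transfer lemma as usually stated.

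A few remarks on the argument's soundness. The potential $\Phi$ is a nonnegative integer, zero exactly when all prefix sums agree, so the induction is well-founded. Your localization $p\le n_0-2$ is right: since both tails are all $1$'s and the totals match, $S_{n_0-1}(\pi)=S_{n_0-1}(\pi')$, forcing equality of all later prefix sums too. The key permissibility check $d_p\ge 3$ follows cleanly from $d_p>d_p'\ge 2$, and the inequality $d_{p-1}=d_{p-1}'\le d_p'<d_p$ is exactly what is needed to pin down the re-sorted positions. Your identification $S_j(\rho)=S_j(\pi)-1$ for $p\le j\le t-1$ (and equality elsewhere) is correct, which gives both $\rho\triangleleft\pi$ and the strict drop in $\Phi$. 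Finally, the contradiction argument showing $S_j(\pi)>S_j(\pi')$ throughout $p\le j\le t-1$ is sound: it uses that $d_{j_0}=c$ in the run, the non-decreasing property of $\pi'$ on the interior block (valid since $j_0+1\le n_0-1$), and the fact that $j_0+1\le n_0-1\le n-2$ so the derived strict inequality really contradicts $\pi'\triangleleft\pi$. This is a complete and correct proof of the lemma in the setting the paper actually uses it.
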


From Lemma \ref{majorizate} and Lemma \ref{seqtransform}, we have the following corollary.

\begin{cor}\label{majorizate2}
    Let $\pi=(d_0, d_1, \ldots, d_{n_0-1},\ldots,d_{n-1})$ and $\pi'=(d_0', d_1', \ldots, d_{n_0-1}',\ldots,d_{n-1}')$ be two degree sequences of some supertrees with the same number of vertices $n$ and interior vertices $n_0$. If $\pi' \triangleleft \pi$ and $\pi\not=\pi'$, then $\lambda(\G')<\lambda(\G)$.
\end{cor}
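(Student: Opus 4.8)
The plan is to derive Corollary~\ref{majorizate2} directly by chaining together Lemma~\ref{majorizate}, which handles a single unit transformation, with Lemma~\ref{seqtransform}, which says the majorization relation $\pi'\triangleleft\pi$ can be realized by a finite sequence of unit transformations. First I would invoke Lemma~\ref{seqtransform} to obtain a chain of integer sequences
\[
\pi=\pi_0,\ \pi_1,\ \ldots,\ \pi_m=\pi',
\]
where each $\pi_{\ell+1}$ is obtained from $\pi_\ell$ by a single unit transformation and $m\geq 1$ since $\pi\neq\pi'$. I should first check that every intermediate sequence $\pi_\ell$ is itself a legitimate degree sequence of a $k$-uniform supertree on $n$ vertices with $n_0$ interior vertices: the total degree sum is preserved under unit transformations (so $|E|=\frac{n-1}{k-1}$ is unchanged), and since each step lowers a coordinate by $1$ and raises a later one by $1$ while the first $n_0$ entries stay $\geq 2$ (the transformation is only applied on $d_p\geq 3$, and the entry being increased was already $\geq 1$; one checks the interior/boundary split $n_0$ is preserved because the two entries changed both lie among the first $n_0$ coordinates). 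Hence for each $\ell$ the class $\mathcal{T}_{\pi_\ell}$ is nonempty, and by Theorem~\ref{mainthm} it contains a (unique up to isomorphism) SLO-supertree $\G_\ell$ with the Faber--Krahn property in $\mathcal{T}_{\pi_\ell}$.

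Next I would apply Lemma~\ref{majorizate} to each consecutive pair. Since $\pi_{\ell+1}$ arises from $\pi_\ell$ by a unit transformation on some coordinate $d_p\geq 3$, and $\G_\ell$, $\G_{\ell+1}$ are the Faber--Krahn extremal supertrees in $\mathcal{T}_{\pi_\ell}$ and $\mathcal{T}_{\pi_{\ell+1}}$ respectively, Lemma~\ref{majorizate} gives $\lambda(\G_{\ell+1})<\lambda(\G_\ell)$ for every $0\leq \ell\leq m-1$. Composing these strict inequalities along the chain yields
\[
\lambda(\G')=\lambda(\G_m)<\lambda(\G_{m-1})<\cdots<\lambda(\G_0)=\lambda(\G),
\]
which is exactly the claimed conclusion $\lambda(\G')<\lambda(\G)$, using $m\geq 1$.

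The one genuine point requiring care — and the step I expect to be the main obstacle — is verifying that Lemma~\ref{seqtransform}, which is stated for arbitrary integer sequences, can be applied so that every intermediate sequence remains a valid supertree degree sequence with the \emph{same} interior/boundary partition, and in particular that each unit transformation in the chain is performed on a coordinate that is $\geq 3$ at that stage (the hypothesis of Lemma~\ref{majorizate}). If the generic chain from Lemma~\ref{seqtransform} does not automatically have this property, one re-orders or slightly modifies the chain: because $\pi'\triangleleft\pi$ with equal sums, and all relevant coordinates among the first $n_0$ are $\geq 2$, a standard argument shows the transformations can always be taken to move a unit from the largest coordinate that currently exceeds its target down toward a coordinate below its target, keeping every intermediate coordinate $\geq 2$ and, at the moment it is decremented, $\geq 3$. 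The remaining bookkeeping — preservation of $n$, of $n_0$, and of the edge count — is routine.
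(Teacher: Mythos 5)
Your proposal takes the same route as the paper: Corollary~\ref{majorizate2} is obtained by composing Lemma~\ref{seqtransform} (decompose $\pi'\triangleleft\pi$ into a chain of unit transformations) with Lemma~\ref{majorizate} (each unit transformation strictly decreases $\lambda$), and the paper's own proof is a one-line appeal to exactly these two lemmas. Your version is correct and in fact more careful than the paper's, rightly flagging the point the paper leaves implicit — that one must verify each intermediate sequence in the chain is an admissible supertree degree sequence with the same $n$ and $n_0$, and that the decremented coordinate is $\geq 3$ and lies among the first $n_0$ positions so that the hypothesis of Lemma~\ref{majorizate} is met at every step.
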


Finally, we give the proof of Theorem~\ref{mainthm2}.
\begin{proof}[Proof of Theorem~\ref{mainthm2}:] 
Let $\mathcal{G}=(V^{\circ}\cup\partial V, E^{\circ}\cup\partial E)$ be a $k$-uniform supertree with Faber-Krahn property in $\T_d^{n,n_0}$.
Let $\pi=\left(d_0, d_1, \ldots, d_{n_0-1}, 1, \ldots, 1\right)$ be the degree sequence of $\G$. Then $d \leqslant d_0 \leqslant d_1 \leqslant \cdots \leqslant d_{n_0-1}$. Let $\pi^{\prime}=\left(d, \ldots, d, d^{\prime}, 1, \ldots, 1\right)$, where $d^{\prime}=\sum_{i=0}^{n_0-1}d_i-(n_0-1)d$. 
If $\pi\neq\pi^{\prime}$, then $\pi' \triangleleft \pi$. Let $\G^{\prime}$ be a supertree with Faber-Krahn property in $\T_{\pi'}$. By Corollary~\ref{majorizate2}, $\lambda(\G^{\prime})<\lambda\left(\G\right)$, a contradiction to the Faber-Krahn property of $\G$. The necessity is proved. The sufficiency follows from the uniqueness of the SLO-supertree in $\mathcal{T}_{\pi}$.
\end{proof}

\section{Remarks and discussions}
In this article, we introduce a spiral-like ordering (SLO-ordering) of vertices for supertrees and prove that the SLO-supertree has the Faber-Krahn property among all supertrees with a given degree sequence. Furthermore, among degree sequences that have a minimum degree $d$ for interior vertices, the SLO-supertree with degree sequence $(d,\ldots,d, d', 1, \dots, 1)$ possesses  the Faber-Krahn property.
These results answer Problem~\ref{PROB: p1} for supertrees with boundary. It is very interesting to explore this problem for other families of hypergraphs, for example, unicyclic hypergraphs, under other given "volume"?

\vspace{5pt}
\noindent{\bf Acknowledgements}:
This work was supported by the National Key Research and Development Program of China (2023YFA1010203), the National Natural Science Foundation of China (No.12471336, 12071453), and the Innovation Program for Quantum Science and Technology (2021ZD0302902).

\end{document}